\def \CC {{\mathbb C}}
\def \RR {{\mathbb R}}
\def \QQ {{\mathbb Q}}
\def \H {{\mathbb H}}
\def \ZZ {{\mathbb Z}}
\def \p {{\mathfrak p}}
\def \P {{\mathfrak P}}
\def \Oc {\mathcal{O}}
\def \D {{\mathcal D}}
\def \Lc {\mathcal{L}}
\def \R {{\mathcal R}}
\def \uv {{\boldsymbol u}}
\def \xv {{\boldsymbol x}}
\def \yv {{\boldsymbol y}}
\def \alv {{\boldsymbol \alpha}}
\def \ord {\Lambda}
\newcommand\QA[2] {\left(\frac{#1}{#2}\right)}
\newcommand\tr[1] {{\rm{tr}}\left(#1\right)}
\newcommand\n[1] {{\rm{n}}\left(#1\right)}
\newcommand\di[1] {{\rm{disc}}\left(#1\right)}
\newcommand\de[1] {{\rm{det}}\left(#1\right)}
\newcommand\trac[2] {{\rm tr}_{#1}\left(#2\right)}
\newcommand\nrd[2] {{\rm n}_{#1}\left(#2\right)}
\newcommand\Trac[2] {{\rm Tr}_{#1}\left(#2\right)}
\newcommand\Nm[2] {{\rm N}_{#1}\left(#2\right)}
\newcommand\N[1] {{\mathcal N}(#1)}
\newcommand\md[2] {\equiv#1~{\rm mod}~#2}
\theoremstyle{definition}
\newtheorem{thm}{Theorem}[section]
\newtheorem{definition}[thm]{Definition}
\newtheorem{theorem}[thm]{Theorem}
\newtheorem{lemma}[thm]{Lemma}
\newtheorem{proposition}[thm]{Proposition}
\newtheorem{corollary}[thm]{Corollary}
\newtheorem{remark}[thm]{Remark}
\newtheorem{example}[thm]{Example}
\begin{document}

\title[Arakelov-Modular Lattices over Quaternion Algebras]{Construction of Arakelov-Modular Lattices over Totally Definite Quaternion Algebras}
\author{Xiaolu Hou}
\address{
Division of Mathematical Sciences\\
Nanyang Technological University, Singapore}
\email {HO0001LU@e.ntu.edu.sg}

\begin{abstract}
We study ideal lattices constructed from totally definite quaternion algebras over totally real number fields, 
and generalize the definition of Arakelov-modular lattices over number fields proposed in \cite{Bayer}.
In particular, we prove for the case where the totally real number field is $\QQ$, that for $\ell$ a prime integer, there always exists a totally definite quaternion over $\QQ$ from which an Arakelov-modular lattice of level $\ell$ can be constructed.
\end{abstract}
\maketitle

\vspace{1cm}
{\em Keywords:} modular lattices, totally definite quaternions
\\

{\em Mathematics Subject Classification:} 11H06, 11R52

%
%
%
\section{Introduction}

A lattice is a pair $(L,b)$, where $L$ is a free $\ZZ-$module of finite rank and $b$ is a positive definite symmetric $\ZZ-$bilinear form on $L\otimes_\ZZ\RR$.
If $L$ has rank $n$ and $\{v_1,v_2,\dots,v_n\}$ is a $\ZZ-$basis for $L$, then the matrix $G:=(b(v_i,v_j))_{1\leq i,j\leq n}$ is the \emph{Gram matrix} of $(L,b)$.
The determinant of the Gram matrix of $(L,b)$ is called the \emph{discriminant} of $L$.
The \emph{dual lattice} of $(L,b)$ is the pair $(L^*,b)$, where
\[
	L^*=\{x\in L\otimes_\ZZ\RR:b(x,y)\in\ZZ\ \forall y\in L\}.
\]
An \emph{integral} lattice is a lattice $(L,b)$ such that $L\subseteq L^*$.
An integral lattice is called \emph{even} if $b(x,x)\in2\ZZ$ for all $x\in\ZZ$ and \emph{odd} otherwise~\cite{Ebeling,Conway}.

For $\ell$ a positive integer, an \emph{$\ell-$modular lattice} \cite{Quebbemann} is an integral lattice such that there exists a bijective $\ZZ-$module homomorphism $\varphi:L^*\to L$ and
\[
	\ell b(x,y)=b(\varphi(x),\varphi(y))\ \forall x,y\in L^*.
\]
When $\ell=1$ we have a \emph{unimodular lattice}.

A common way of constructing $\ell-$modular lattices is by using ideals of number fields~\cite{Feit,Batut,Bayer} the resulting lattices are then called \emph{ideal lattices}~\cite{BayerIdeallattice}.
In~\cite{Feit}, a construction of unimodular lattices by ideal lattices over $\QQ(\sqrt{-3})$ is given.
A more general construction over cyclotomic extension of imaginary quadratic fields can be found in~\cite{Batut}.
The notion of Arakelov-modular lattice, which gives modular lattice, was introduced in~\cite{Bayer}, where the authors classified the construction of Arakelov-modular lattices over cyclotomic fields.
This paper generalizes this notion to construct Arakelov-modular lattices over the ideals of totally definite quaternion algebras over totally real number fields.

The construction by ideals of quaternions was also used in~\cite{Martinet} for two particular cases, $\QA{-1,-1}{\QQ}$ and $\QA{-1,-3}{\QQ}$, for constructing 2 and 3 modular lattices respectively.
This is a special case of our construction (see Example~\ref{exp:Martinet}).

We will discuss in details the definition of the bilinear form we use in Section 2 and introduce the definition of ideal lattices over totally definite quaternions in Section 3.
In Section 4 the generalized notion of Arakelov-modular lattice is introduced.
In Section 5, we will focus on the case where the underlying number field is the rational field, for which we obtain existence results and classify Arakelov-modular lattices for $\ell$ a prime.
In particular, we will prove that, given any prime $\ell$, there exists a totally definite quaternion algebra over $\QQ$ over which an Arakelov-modular lattice of level $\ell$ can be constructed (Theorem~\ref{thm:QA-lprimerationalfield}).

%
%
%
\section{Totally Definite Quaternion Algebras}

Let $K$ be a totally real number field with degree $n=[K:\QQ]$.
Let $A=\QA{a,b}{K}$ be a quaternion algebra over $K$ with standard basis $\{1,i,j,ij\}$, i.e., $A$ is a $4$-dimensional vector space over $K$ with basis $\{1,i,j,ij\}$ such that
\[
i^2=a,\ j^2=b,\ ij=-ji,
\]
for some $a,b\in K^\times$.

The quaternion algebra $A$ is equipped with a canonical involution (or conjugation) given by 
\begin{eqnarray}\label{eqn:conjugation}
^-:A&\to&A\\
\alpha=x_0+x_1i+x_2j+x_3ij&\mapsto&\bar{\alpha}=x_0-x_1i-x_2j-x_3ij,\nonumber
\end{eqnarray}
from which are defined the reduced trace on $A$:
\begin{eqnarray*}
	{\rm tr}_{A/K}:A&\to& K\\
	\alpha&\mapsto&\alpha+\bar{\alpha},
\end{eqnarray*}
and similarly the reduced norm:
\begin{eqnarray*}
	{\rm n}_{A/K}:A&\to& K\\
	\alpha&\mapsto&\alpha\bar{\alpha}.
\end{eqnarray*}

For $\alpha=x_0+x_1i+x_2j+x_3ij\in A$ 
\begin{eqnarray}\label{eqn:traceQA}
	{\rm tr}_{A/K}(\alpha)&=&\alpha+\bar{\alpha}=2x_0\\
	\nrd{A/K}{\alpha}&=&\alpha\bar{\alpha}=x_0^2-ax_1^2-bx_2^2+abx_3^2\nonumber.
\end{eqnarray}

Denote the real embeddings of $K$ by $\sigma_1,\dots,\sigma_n$.
Let $K_v$ denote the completion of $K$ at the Archimedean place corresponding to a real embedding $\sigma \in \{\sigma_1,\ldots,\sigma_n\}$, then either $A_v=A\otimes_KK_v\cong\H=\QA{-1,-1}{\mathbb{R}}$, in which case $A$ is ramified, or $A_v$ is isomorphic to the ring $M_2(\RR)$ of $2\times 2$ real coefficients~\cite[p.93]{Maclachlan}. 
Then $A$ is unramified.

Let $s_1$ be the number of real places at which $A$ is ramified. Then for a number field of signature $(r_1,r_2)$, that is with $r_1$ real embeddings and $r_2$ pairs of complex embeddings, we have the following isomorphism $\phi$:
\begin{equation}\label{eqn:AR}
	A_\RR:=A\otimes_\QQ\RR\cong\oplus s_1\H\oplus(r_1-s_1)M_2(\RR)\oplus r_2 M_2(\CC)
\end{equation}
so that in particular, for $K$ totally real 
\[
A_\RR=A\otimes_\QQ\RR\cong\oplus s_1\H\oplus(n-s_1)M_2(\RR).
\]
We will often identify an element $\alv=\alpha\otimes h \in A_\RR$ with its image $\phi(\alv)=(\alpha_1,\ldots,\alpha_n)$,
where $\alpha_m=h\sigma_m(\alpha)$, and $\sigma_m$ is understood componentwise on the coefficients $x_0,x_1,x_2,x_3$ of $\alpha$, written in a suitable basis.

The conjugation (\ref{eqn:conjugation}) on $A$ can be extended to $A_\RR$ by defining the conjugation of $\alv=\alpha \otimes h$, $h\in\RR^\times$, to be that of $\phi(\alpha \otimes h)$, and the conjugation of $\phi(\alpha \otimes h)$ to be in turn $\overline{\phi(\alpha \otimes h)}=\phi(\bar{\alpha}\otimes h)$.
We have $\overline{\phi(\alpha \otimes h)}=\phi(\alpha \otimes h) \iff \bar{\alpha}=\alpha$.
Define thus the set
\[
\mathcal{P}=\{\boldsymbol{\alpha}:\boldsymbol{\alpha}\in A_\RR,\boldsymbol{\alpha}=\bar{\boldsymbol{\alpha}}\}=\{\alpha\otimes h:\alpha\in K^\times\},
\]
which belongs to the center of $A_\RR$.

Take $\boldsymbol{\alpha}\in\mathcal{P}$, and define
\begin{eqnarray}\label{eqn:bilinearformqalpha}
q_\alv:A_\RR\times A_\RR&\to&\RR\\
(\xv,\yv)&\mapsto&\tr{\alv\xv\bar{\yv}} \nonumber
\end{eqnarray}
where ${\rm tr}$ denote the reduced trace on the separable $\RR-$algebra $A_\RR$, given by
\[
\tr{\xv}=\sum_{m=1}^{s_1}\trac{\H/\RR}{x_m}+\sum_{m=s_1+1}^{r_1}\trac{M_2(\RR)/\RR}{x_m}+\sum_{m=r_1+1}^{r_1+r_2}\trac{M_2(\CC)/\RR}{x_m}
\]
for ${\xv}=(x_1,\ldots,x_n) \in A_\RR$ and with
\[
\trac{M_2(\CC)/\RR}{x_m}=\trac{M_2(\CC)/\CC}{x_m}+\overline{\trac{M_2(\CC)/\CC}{x_m}}.
\]

\begin{lemma}
For $\boldsymbol{\alpha}\in\mathcal{P}$, $q_\alv$ is a non-degenerate symmetric $\ZZ-$bilinear form, and
\[
	q_\alv(\uv\xv,\yv)=q_\alv(\xv,\bar{\uv}\yv)
\]
for all $\xv,\yv,\uv\in A_\RR$.
\end{lemma}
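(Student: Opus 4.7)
The plan is to verify the four assertions in turn: bilinearity, symmetry, the adjoint identity, and non-degeneracy, using only $\RR$-linearity and cyclicity of $\tr{\cdot}$, centrality and self-conjugacy of $\alv\in\mathcal{P}$, and the fact that conjugation is an $\RR$-linear anti-involution on $A_\RR$.

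Bilinearity is immediate from $\RR$-linearity of $\tr{\cdot}$, $\RR$-bilinearity of multiplication in $A_\RR$, and $\RR$-linearity of $\yv\mapsto\bar\yv$ (defined componentwise via the canonical involutions on $\H$, $M_2(\RR)$, $M_2(\CC)$); this of course implies $\ZZ$-bilinearity. For symmetry I would note that $\tr{\cdot}$ is invariant under conjugation on each factor of $A_\RR$ (on $\H$ the reduced trace is twice the scalar part, hence manifestly invariant; on the matrix factors it coincides with a cyclic matrix trace that is preserved by the canonical involution) and combine this with $\bar\alv=\alv$ and centrality of $\alv$ to compute $\overline{\alv\xv\bar\yv}=\yv\bar\xv\bar\alv=\alv\yv\bar\xv$; taking reduced traces then yields $q_\alv(\xv,\yv)=q_\alv(\yv,\xv)$. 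The adjoint identity is a short calculation using cyclicity of $\tr{\cdot}$ and centrality of $\alv$:
\[
q_\alv(\uv\xv,\yv)=\tr{\alv\uv\xv\bar\yv}=\tr{\alv\xv\bar\yv\uv}=\tr{\alv\xv\,\overline{\bar\uv\yv}}=q_\alv(\xv,\bar\uv\yv),
\]
where the last step uses $\overline{\bar\uv\yv}=\bar\yv\uv$.

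The only step requiring more than symbol pushing is non-degeneracy, which I would reduce to non-degeneracy of the bare trace pairing $(\uv,\vv)\mapsto\tr{\uv\vv}$ on $A_\RR$. Indeed, if $q_\alv(\xv,\yv)=0$ for every $\yv\in A_\RR$, then letting $\yv=\bar\vv$ (so that $\vv$ also ranges over $A_\RR$) yields $\tr{(\alv\xv)\vv}=0$ for every $\vv$, forcing $\alv\xv=0$; since $\alv=\alpha\otimes h$ with $\alpha\in K^\times$ maps under $\phi$ to a nonzero scalar in each factor of $A_\RR$, left-multiplication by $\alv$ is an $\RR$-linear automorphism of $A_\RR$, and hence $\xv=0$. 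The substantive input is therefore non-degeneracy of the trace pairing on $A_\RR$, which follows factor by factor from the decomposition (\ref{eqn:AR}) since each of $\H$, $M_2(\RR)$, $M_2(\CC)$ is a finite-dimensional central simple algebra with non-degenerate reduced trace form. I expect this factorwise invocation to be the only place where a non-trivial structural fact about $A_\RR$ enters the argument.
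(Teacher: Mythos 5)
Your proof is correct and takes essentially the same route as the paper: symmetry and the adjoint identity follow from centrality of $\alv$ together with conjugation-invariance and cyclicity of the reduced trace, which you state globally where the paper unpacks the same identities factor by factor on the decomposition of $A_\RR$ into $\H$, $M_2(\RR)$ and $M_2(\CC)$ summands. The one point where you go beyond the paper is non-degeneracy: the paper simply asserts that the reduced trace form of the separable algebra $A_\RR$ is non-degenerate, whereas you correctly supply the missing step that $q_\alv(\xv,\cdot)\equiv 0$ forces $\alv\xv=0$ and that left multiplication by $\alv$ is invertible (since $\alpha\in K^\times$ and $h\in\RR^\times$), so your treatment of that clause is actually the more complete one.
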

\begin{proof}
	Since {\rm tr} is the reduced trace for the $\RR-$separable algebra $A_\RR$, it is a non-degenerate $\ZZ-$bilinear form. Now since $\alv$ is in the center of $A_\RR$,
\[
\trac{\H/\RR}{\alpha_m x_m\bar{y}_m}=\alpha_mx_m\bar{y}_m+y_m\bar{x}_m\alpha_m=\trac{\H/\RR}{\alpha_my_m\bar{x}_m}
\]
for $m=1,\ldots,s_1$. Thus
\begin{eqnarray*}
q_\alv(\xv,\yv)&=&\tr{\alv \xv\bar{\yv}} \\
           &=&\displaystyle\sum_{m=1}^{s_1}\trac{\H/\RR}{\alpha_m x_m\bar{y}_m}+\sum_{m=s_1+1}^{r_1}\trac{M_2(\RR)/\RR}{\alpha_m x_m\bar{y}_m} \\
           & &+\sum_{m=r_1+1}^{r_1+r_2}\trac{M_2(\CC)/\RR}{\alpha_m x_m\bar{y}_m}\\
           &= & q_\alv(\yv,\xv)
	\end{eqnarray*}
using a similar argument on $\trac{M_2(\RR)/\RR}{\alpha_mx_m\bar{y}_m}$ for $m=s_1+1,\ldots,r_1$ and on 
$\trac{M_2(\CC)/\CC}{\alpha_mx_m\bar{y}_m}$ for $m=r_1+1,\dots,r_1+r_2$.
This proves that $q_\alv$ is symmetric.

Now take any $\xv,\yv,\uv\in A_\RR$, and consider $q_\alv(\uv\xv,\yv)=\tr{\alv\uv\xv\bar{\yv}}$.
For $m=1,2,\dots,s_1$, using again that $\alv$ is in the center of $A_\RR$, 
\[
\trac{\H/\RR}{\alpha_mu_mx_m\bar{y}_m}=\trac{\H/\RR}{u_m\alpha_mx_m\bar{y}_m}=\trac{\H/\RR}{\alpha_mx_m\bar{y}_mu_m}
\]
and using a similar argument for $\trac{M_2(\RR)/\RR}{\alpha_mx_m\bar{y}_m}$, $m=s_1+1,\ldots,r_1$, and for 
$\trac{M_2(\CC)/\CC}{\alpha_mx_m\bar{y}_m}$, $m=r_1+1,\dots,r_1+r_2$, we conclude that
\[
q_\alv(\uv\xv,\yv)=q_\alv(\xv,\bar{\uv}\yv).
\]
\end{proof}
When $K$ is a totally real number field, and $A$ is a quaternion algebra ramified at all the real places, i.e., $s_1=r_1=n$, we say that $A$ is totally definite~\cite[34.1]{Reiner}.
For this case, define
\[
	\mathcal{P}_{>0}=\{\alv:\alv\in\mathcal{P}, \alpha_m>0\ \forall m\}.
\]
\begin{proposition}\label{prop:bilinearformQA}
	Let $K$ be a totally real number field, $A$ a totally definite quaternion algebra over $K$, and take $\alv\in\mathcal{P}_{>0}$, then 
	\begin{eqnarray*}
q_\alv:A_\RR&\to& A_\RR\\
(\xv,\yv)&\mapsto&\tr{\alv\xv\bar{\yv}}
	\end{eqnarray*}
	is a positive definite symmetric $\ZZ-$bilinear form.
\end{proposition}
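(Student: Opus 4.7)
The plan is to leverage the previous lemma, which already establishes that $q_\alv$ is a symmetric $\ZZ$-bilinear form (and non-degenerate) for any $\alv\in\mathcal{P}$; what remains is to show positive definiteness under the stronger hypothesis $\alv\in\mathcal{P}_{>0}$ and totally definite $A$.

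First, I would specialize the isomorphism (\ref{eqn:AR}) to the totally definite setting. Since $s_1=r_1=n$ and $r_2=0$, we have $A_\RR\cong\oplus_{m=1}^n\H$, and the reduced trace reduces to $\tr{\xv}=\sum_{m=1}^n\trac{\H/\RR}{x_m}$. The condition $\alv\in\mathcal{P}$ means $\alv=\bar{\alv}$, so in each Hamiltonian component the entry $\alpha_m$ is fixed by quaternion conjugation, hence lies in $\RR$, and the further restriction $\alv\in\mathcal{P}_{>0}$ forces $\alpha_m>0$ for every $m$.

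Next, for $\xv=(x_1,\dots,x_n)\in A_\RR$, I would compute
\[
q_\alv(\xv,\xv)=\sum_{m=1}^n\trac{\H/\RR}{\alpha_m x_m\bar{x}_m}=\sum_{m=1}^n 2\alpha_m\,\nrd{\H/\RR}{x_m},
\]
using the fact that $x_m\bar{x}_m=\nrd{\H/\RR}{x_m}$ is a nonnegative real and that each $\alpha_m$ is central and real. Since $\alpha_m>0$ and $\nrd{\H/\RR}{x_m}\geq 0$ with equality iff $x_m=0$, each term is nonnegative, and the full sum vanishes precisely when every component $x_m$ is zero, i.e. $\xv=0$. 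This yields positive definiteness.

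There is no serious obstacle here; the argument is essentially a component-by-component reduction to the classical fact that the quaternionic norm on $\H$ is positive definite. The only step requiring care is checking that elements of $\mathcal{P}$ really act as real scalars in each Hamiltonian component so that the positivity hypothesis $\alpha_m>0$ can be applied term by term; this is immediate from $\alv=\bar{\alv}$ together with $Z(\H)=\RR$.
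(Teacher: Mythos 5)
Your proposal is correct and follows essentially the same route as the paper: reduce componentwise via $A_\RR\cong\oplus_{m=1}^n\H$ and observe that $q_\alv(\xv,\xv)=\sum_{m=1}^n 2\alpha_m\nrd{\H/\RR}{x_m}$ with $\alpha_m>0$. If anything, you are slightly more complete than the paper, which stops at the inequality $\geq 0$ and leaves implicit that the sum vanishes only when every $x_m=0$.
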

\begin{proof}
We are left to prove that $q_\alv$ is positive definite.
For any $\xv\in A_\RR$, 
\begin{eqnarray*}
	\tr{\alv\xv\bar{\xv}}&=&\displaystyle\sum_{m=1}^{n}\trac{\H/\RR}{\alpha_mx_m\bar{x}_m},
\end{eqnarray*}
where
\begin{eqnarray*}
\trac{\H/\RR}{\alpha_mx_m\bar{x}_m} = \alpha_m\trac{\H/\RR}{\nrd{\H/\RR}{x_m}}=2\alpha_m\nrd{\H/\RR}{x_m} \geq 0
\end{eqnarray*}
since $A$ is totally definite and $\alv\in\mathcal{P}_{>0}$.
\end{proof}

The reduced trace of $\xv\in A_\RR$ for a totally definite quaternion algebra $A$
\[
\tr{\xv}=\sum_{m=1}^n\trac{\H/\RR}{x_m}=\sum_{m=1}^n(x_m+\bar{x}_m)
\]
is alternatively simplified, for all $x\in A$, to 
\begin{equation}\label{eqn:traceAinAQdef}
\tr{x\otimes1}=\sum_{m=1}^n(\sigma_m(x)+\overline{\sigma_m(x)})=\Trac{K/\QQ}{\trac{A/K}{x}}.
\end{equation}

Similarly, the reduced norm of $\xv\in A_\RR$~\cite[p.121 9.23]{Reiner} is
\begin{equation}
	\n{\xv}=\prod_{m=1}^n\nrd{\H/\RR}{x_m}=\prod_{m=1}^n(x_m\bar{x}_m),
\end{equation}
and for any $x\in A$~\cite[p.122 Theorem 9.27 and p.121 (9.23)]{Reiner},
\begin{equation}\label{eqn:normtdAinQA}
	\n{x\otimes1}=\nrd{A/\QQ}{x}=\Nm{K/\QQ}{\nrd{A/K}{x}}.
\end{equation}
We will write $\tr{x}$, $\n{x}$ instead of $\tr{x\otimes 1}$, $\n{x\otimes1}$ respectively whenever there is no confusion.

%
%
%
\section{Ideal Lattices in Totally Definite Quaternion Algebras}

As before, $K$ is a totally real number field of degree $n$, and $A=\QA{a,b}{K}$ is a totally definite quaternion algebra over $K$ with standard basis $\{1,i,j,ij\}$.
Let $\Oc_K$ be the ring of integers of $K$.

Let $\ord$ be an order of $A$, that is, an ideal of $A$ (i.e., a finitely generated $\Oc_K-$module contained in $A$ such that $I\otimes_{\Oc_K}K\cong A$) which is also a subring of $A$. We furthermore assume that $\ord$ is maximal (that is, not properly contained in another order).

For a maximal order $\ord$ of $A$, we define the following three sets of ideals of $A$~\cite[Section 6.7]{Maclachlan}
\begin{equation}\label{eqn:LRLambda}
	\Lc(\ord)=\{I:\Oc_\ell(I)=\ord\},\ \R(\ord)=\{I:\Oc_r(I)=\ord\},\ \ \Lc\R(\ord)=\Lc(\ord)\cap\R(\ord),
\end{equation}
where
\[
	\Oc_\ell(I)=\{\alpha\in A:\alpha I\subset I\},\ \ \Oc_r(I)=\{\alpha\in A:I\alpha\subset I\}
\]
are respectively the order on the left of $I$ and the order on the right of $I$~\cite[p.84]{Maclachlan}.

Recall the definition of the codifferent of $\ord$ over $\Oc_K$ or $\ZZ$~\cite[p.217]{Reiner}, or of $\Oc_K$ over $\ZZ$~\cite[p.159]{Neukirch}, given respectively by
\begin{eqnarray}
\D_{\ord/\Oc_K}^{-1}&=&\{x\in A:\trac{A/K}{xy}\in\Oc_K\ \forall y\in\ord\}  \in\Lc\R(\ord) \label{eqn:differentordok} \\
\D_{\ord/\ZZ}^{-1}&=&\{x\in A:\Trac{K/\QQ}{\trac{A/K}{xy}}\in\ZZ\ \forall y\in\ord\}\\
\D_{\Oc_K/\ZZ}^{-1}&=&\{x\in K:\Trac{K/\QQ}{xy}\in\ZZ\ \forall y\in\Oc_K\} \label{eqn:differentokzQA}.
\end{eqnarray}
To each corresponds an inverse ideal called different, respectively denoted by $\D_{\ord/\Oc_K} \in\Lc\R(\ord)$, $\D_{\ord/\ZZ}$ and $\D_{\Oc_K/\ZZ}$. 

\begin{definition}	
	An ideal $I\subset A$ is called a {\em generalized two-sided ideal} of a maximal order $\ord$ if there exist $t\in A^\times$ and $J\in\Lc\R(\ord)$ such that $I=Jt=\{yt:y\in J\}$.	
\end{definition}

\begin{definition}\label{def:normIinAR}
For $I=Jt$ a generalized two-sided ideal of $\ord$, its {\em reduced norm} $\n{I}$ in $A_\RR$ is by definition
\[
\n{I} = \Nm{K/\QQ}{\nrd{A/K}{J}}\n{t},
\]
where $\nrd{A/K}{J}$ is the fractional ideal generated by the elements $\{\nrd{A/K}{x}:x\in J\}$~\cite[p.199]{Maclachlan}.
\end{definition}

Consider the following symmetric positive definite $\ZZ-$bilinear form:
\begin{eqnarray}\label{eqn:bilinearformQA}
q_\alpha:A_\RR\times A_\RR&\to&\RR\nonumber\\
(x,y)&\mapsto&\tr{(\alpha\otimes1)x\bar{y}},
\end{eqnarray}
where $\alpha\in K^\times$ is totally positive and for simplicity, we write $x,y$ instead of $\xv,\yv$ if there is no confusion. This is a particular case of the previous section, where we restrict to the case when $\alv=\alpha\otimes 1$ for $\alpha\in K^\times$.
Note that $\alpha\otimes1\in\mathcal{P}_{>0}$ if and only if $\alpha$ is totally positive, i.e., $\sigma_i(\alpha)>0$ for all $\sigma_i:K\hookrightarrow\RR$.

\begin{definition}
	An {\em ideal lattice} over a maximal order $\ord$ is a pair $(I,q_\alpha)$, where $I=Jt$ is a generalized two-sided ideal of $\ord$.
\end{definition}

Take an ideal lattice $(I,q_\alpha)$ over a maximal order $\ord$ of $A$, where $I=J t$ for some $t\in A^\times$ and $J\in\Lc\R(\ord)$ is such that it admits a free $\Oc_K-$basis $\{v_1,v_2,v_3,v_4\}$.
Let $\{\beta_1,\dots,\beta_n\}$ be a $\ZZ-$basis for $\Oc_K$.
Thus $\{\beta_iv_j\}_{{\tiny \substack{1\leq i\leq n\\ 1\leq j\leq4}}}$ is a $\ZZ-$basis for $\ord$ and $\{\beta_iv_jt\}_{{\tiny \substack{1\leq i\leq n\\ 1\leq j\leq4}}}$ is a $\ZZ-$basis for $I$.

A Gram matrix of $(I,q_\alpha)$ is given by
\[
	G=(q_\alpha(\beta_{k}v_{i}t,\beta_{m}v_{j}t))_{{\tiny \substack{1\leq k,m\leq n\\ 1\leq i,j\leq4}}}.
\]	
For fixed $i,j$, $(q_\alpha(\beta_{k}v_{i}t,\beta_{m}v_{j}t))_{{\tiny \substack{1\leq k,m\leq n}}}$ is an $n\times n$ matrix whose coefficients are given by
\[
	q_\alpha(\beta_kv_it,\beta_mv_jt)=\tr{\alpha\beta_kv_it\overline{(\beta_mv_jt)}}=\tr{\alpha\beta_kv_it\bar{t}\overline{\beta_mv_j}},
\]
where $\alpha$ (identified with $\alpha\otimes1$), $\beta_k, \beta_m, t\bar{t}=\nrd{A/K}{t}\in K^\times$, so that, together with Eq. (\ref{eqn:traceAinAQdef}), we have
\begin{eqnarray*}
	q_\alpha(\beta_kv_it,\beta_mv_jt)
&=&\tr{\alpha\nrd{A/K}{t}\beta_k \beta_m v_i\bar{v}_j}\\
&=&\Trac{K/\QQ}{\trac{A/K}{\alpha\nrd{A/K}{t}\beta_k\beta_mv_i\bar{v}_j}}\\
&=&\sum_{\ell=1}^n\sigma_\ell\left(\alpha\nrd{A/K}{t}\beta_k\beta_m\trac{A/K}{v_i\bar{v}_j}\right)\\
&=&\sum_{\ell=1}^n\sigma_\ell(\alpha\nrd{A/K}{t})\sigma_\ell(\beta_k\beta_m)\sigma_\ell(\trac{A/K}{v_i\bar{v}_j}).
\end{eqnarray*}
Let $B=(\sigma_\ell(\beta_k))$, then for fixed $i,j$, $(q_\alpha(\beta_{k}v_{i}t,\beta_{m}v_{j}t))_{{\tiny \substack{1\leq k,m\leq n}}}=BH_{ij}B^\top$, where
\[
	H_{ij}=
	\begin{bmatrix}
		\sigma_1(\alpha\nrd{A/K}{n}\trac{A/K}{v_i\bar{v}_j})&\dots&0\\
		\vdots&\ddots&\vdots\\
		0&\dots&\sigma_n(\alpha\nrd{A/K}{n}\trac{A/K}{v_i\bar{v}_j})
	\end{bmatrix}.
\]
Hence
\begin{equation}\label{eqn:grammatrixQA}
G=
\begin{bmatrix}
B&0&0&0\\
0&B&0&0\\
0&0&B&0\\
0&0&0&B
\end{bmatrix}
\begin{bmatrix}
	H_{11}&H_{12}&H_{13}&H_{14}\\
	H_{21}&H_{22}&H_{23}&H_{24}\\
	H_{31}&H_{32}&H_{33}&H_{34}\\
	H_{41}&H_{42}&H_{43}&H_{44}
\end{bmatrix}
\begin{bmatrix}
B^\top&0&0&0\\
0&B^\top&0&0\\
0&0&B^\top&0\\
0&0&0&B^\top
\end{bmatrix}.
\end{equation}

\begin{proposition}\label{prop:latticediQA}
	An ideal lattice $(I,q_\alpha)$ over a maximal order $\ord$ of $A$, such that $I$ has a $\ZZ$-basis, has dimension $4n$, Gram matrix (\ref{eqn:grammatrixQA}) and discriminant 
\[
\n{\alpha}^2\n{I}^4\n{\D_{\ord/\ZZ}}^2,
\]
where $\n{\alpha}=\Nm{K/\QQ}{\nrd{A/K}{\alpha}}$ is the reduced norm of $\alpha$ in $A_\RR/\RR$, and $\n{I}$ is the reduced norm of $I$ in $A_\RR/\RR$ (see \ref{def:normIinAR}) and $\D_{\ord/\ZZ}$ is the different of $\ord$ over $\ZZ$.
\end{proposition}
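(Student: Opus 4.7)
The plan is to establish the three assertions in order. For the dimension, the set $\{\beta_i v_j t : 1\le i\le n,\ 1\le j\le 4\}$ identified just before the proposition is a $\ZZ$-basis of $I$, so $I$ has $\ZZ$-rank $4n$. The block form (\ref{eqn:grammatrixQA}) of the Gram matrix then follows from the computation of $q_\alpha(\beta_k v_i t, \beta_m v_j t)$ already carried out in the excerpt, where the dependence on $\beta_k,\beta_m$ is absorbed into the matrices $B$ and $B^{\top}$ because these factors lie in $K$.

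For the discriminant, I would apply block multiplicativity to write
\[
\det(G) = \det(B)^8 \det(H),
\]
where $H$ is the $4n\times 4n$ matrix assembled from the diagonal blocks $H_{ij}$. Since each $H_{ij}$ is itself diagonal, a simultaneous row and column permutation converts $H$ into a block-diagonal matrix consisting of $n$ blocks $H_\ell$ of size $4\times 4$, where
\[
H_\ell = \Bigl(\sigma_\ell\bigl(\alpha\,\nrd{A/K}{t}\,\trac{A/K}{v_i\bar{v}_j}\bigr)\Bigr)_{1\le i,j\le 4}.
\]
Factoring $\sigma_\ell(\alpha\,\nrd{A/K}{t})$ out of each column and taking the product over $\ell$ yields
\[
\det(H) = \Nm{K/\QQ}{\alpha}^4 \, \Nm{K/\QQ}{\nrd{A/K}{t}}^4 \, \Nm{K/\QQ}{\det\bigl(\trac{A/K}{v_i\bar{v}_j}\bigr)}.
\]
Combining $\nrd{A/K}{\alpha}=\alpha^2$ (so that $\n{\alpha}=\Nm{K/\QQ}{\alpha}^2$) with $\n{t}=\Nm{K/\QQ}{\nrd{A/K}{t}}$ from (\ref{eqn:normtdAinQA}) and $\n{I}^4=\Nm{K/\QQ}{\nrd{A/K}{J}}^4\,\n{t}^4$ from Definition~\ref{def:normIinAR}, the proposition reduces to the identity
\[
\det(B)^8\,\Nm{K/\QQ}{\det\bigl(\trac{A/K}{v_i\bar{v}_j}\bigr)} = \Nm{K/\QQ}{\nrd{A/K}{J}}^4\,\n{\D_{\ord/\ZZ}}^2.
\]

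To verify this identity, I would first note that conjugation acts on the standard basis $\{1,i,j,ij\}$ as $\mathrm{diag}(1,-1,-1,-1)$, so that $\det(\trac{A/K}{v_i\bar{v}_j})$ and $\det(\trac{A/K}{v_i v_j})$ differ only by a sign; the latter generates the algebra discriminant $\di{J/\Oc_K}$. Combining the classical index formula $[\ord:J]=\nrd{A/K}{J}^2$ for two-sided ideals of a maximal quaternion order with the identity $\di{\ord/\Oc_K}=\nrd{A/K}{\D_{\ord/\Oc_K}}^2$ yields
\[
\di{J/\Oc_K}=\nrd{A/K}{J}^4\,\nrd{A/K}{\D_{\ord/\Oc_K}}^2.
\]
Applying $\Nm{K/\QQ}{\cdot}$, using $\det(B)^2=\Nm{K/\QQ}{\D_{\Oc_K/\ZZ}}$, and invoking the tower formula $\D_{\ord/\ZZ}=\D_{\ord/\Oc_K}\cdot\D_{\Oc_K/\ZZ}\ord$ (whose reduced norm gives $\nrd{A/K}{\D_{\ord/\ZZ}}=\nrd{A/K}{\D_{\ord/\Oc_K}}\cdot\D_{\Oc_K/\ZZ}^2$), one obtains $\det(B)^8\,\n{\D_{\ord/\Oc_K}}^2=\n{\D_{\ord/\ZZ}}^2$, which closes the computation.

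The main obstacle I expect lies in this last paragraph: one must combine the quaternionic index formula, the relation between the algebra discriminant and the reduced norm of the different, and the tower formula for differents, taking care that every quantity is interpreted as an ideal norm in the correct base ring. All remaining steps are essentially routine linear algebra and determinant manipulations.
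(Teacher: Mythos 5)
Your proposal is correct, and its skeleton coincides with the paper's: the factorization $\det(G)=\det(B)^8\det(H)$, the permutation argument giving $\det(H)=\Nm{K/\QQ}{\alpha\nrd{A/K}{t}}^4\Nm{K/\QQ}{\de{(\trac{A/K}{v_i\bar{v}_j})}}$, the observation that conjugation changes $\de{(\trac{A/K}{v_iv_j})}$ only by a unit of square one, and the closing identity $\Delta_K^4\,\n{\D_{\ord/\Oc_K}}^2=\n{\D_{\ord/\ZZ}}^2$ via $\D_{\ord/\ZZ}=\D_{\ord/\Oc_K}\D_{\Oc_K/\ZZ}$ and $\Nm{K/\QQ}{\D_{\Oc_K/\ZZ}}=\Delta_K$ are all exactly the paper's steps. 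The one place you diverge is in passing from $\ord$ to a general $J\in\Lc\R(\ord)$: you keep the free $\Oc_K$-basis of $J$ throughout and invoke $\di{J/\Oc_K}=[\ord:J]^2\di{\ord/\Oc_K}$ together with $[\ord:J]=\nrd{A/K}{J}^2$, whereas the paper first settles the case $J=\ord$ and then handles general $J$ by choosing $x\in\Oc_K$ with $Jx\subseteq\ord$ and applying the $\ZZ$-lattice index formula $\di{(Jt,q_\alpha)}=\di{(\ord x^{-1}t,q_\alpha)}\,|\ord x^{-1}t/Jt|^2$. Both routes rest on the same index-squared principle; yours is slightly cleaner in that it avoids the auxiliary element $x$ and the two-case split, while the paper's avoids having to interpret the $\Oc_K$-discriminant of an ideal that is not an order (at the cost of computing the $\ZZ$-index $|\ord/Jx|=\Nm{K/\QQ}{x}^4\Nm{K/\QQ}{\nrd{A/K}{J}}^2$). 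No gap in either.
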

\begin{proof}
We are left to compute the discriminant of $(I,q_\alpha)$, which is the determinant of $G$:
\[
\de{G}=(\de{BB^\top})^4\de{H}=\de{B}^8\de{H}, 
\]
where $H=(H_{ij})$.
After row and column permutations of $H$, we get
\begin{eqnarray*}
\de{H}  &=&\prod_{\ell=1}^n\de{\sigma_\ell(\alpha\nrd{A/K}{t}\trac{A/K}{v_i\bar{v}_j})_{i,j}}\\
	&=&\prod_{\ell=1}^n\sigma_\ell(\de{(\alpha\nrd{A/K}{t}\trac{A/K}{v_i\bar{v}_j})_{i,j}})\\
        &=&\prod_{\ell=1}^n\sigma_\ell((\alpha\nrd{A/K}{t})^4)\sigma_\ell(\de{(\trac{A/K}{v_i\bar{v}_j})_{i,j}})\\
	&=&(\Nm{K/\QQ}{\alpha\nrd{A/K}{t}})^4\Nm{K/\QQ}{(\de{(\trac{A/K}{v_i\bar{v}_j})_{i,j}}},
\end{eqnarray*}
while
\[
	\de{B}^2 =\de{(\sigma_i(\beta_j))}^2=\Delta_K
\]
where $\Delta_K$ is the discriminant of $K$ by definition.
Thus
\[
\det(G)=\Delta_K^4 \n{\alpha}^2 \n{t}^4\Nm{K/\QQ}{(\de{(\trac{A/K}{v_i\bar{v}_j})_{i,j}}}.
\]

If $J=\ord$, that is $I=\ord t$, then
\[
\det(G)=\Delta_K^4 \n{\alpha}^2 \n{t}^4\Nm{K/\QQ}{\di{\ord/\Oc_K}}.
\]
Indeed, since $\{v_1,v_2,v_3,v_4\}$ is a free $\Oc_K-$basis for $\ord$, the discriminant $\di{\ord/\Oc_K}$ is the principal ideal~\cite[p.205]{Maclachlan}
\begin{equation}\label{eqn:discordok}
	\de{(\trac{A/K}{v_iv_j})_{i,j}} \Oc_K,
\end{equation}
and 
\[
\de{(\trac{A/K}{v_i\bar{v}_j})_{i,j}}\Oc_K=\de{(\trac{A/K}{v_iv_j})_{i,j}}\Oc_K
\]
by noting that $\de{(\trac{A/K}{v_i\bar{v}_j})_{i,j}}=\de{(\trac{A/K}{v_iv_j})_{i,j}}\de{(a_{kj})_{k,j}}$
for $(a_{kj})_{k,j}\in M_4(\Oc_K)$ an invertible matrix such that $\bar{v}_j=\sum_{k=1}^4a_{kj}v_k$.
Then
\[
\Nm{K/\QQ}{\di{\ord/\Oc_K}}=|\Nm{K/\QQ}{\de{(\trac{A/K}{v_i\bar{v}_j})_{i,j}}}|.
\]
Note that the determinant of a positive definite matrix is always positive.

If $I=Jt$, $J\neq \ord$, take $x\in\Oc_K$ such that $Jx\subseteq\ord$, then $Jt\subseteq \ord x^{-1}t$ and the discriminant of $(I,q_\alpha)$ is given by (see \cite[p.2]{Ebeling}) 
\begin{eqnarray*}
	\di{(I,q_\alpha)}&=&\di{(\ord x^{-1}t,q_\alpha)}|\ord x^{-1}t/Jt|^2\\ 
	&=&\Delta_K^4\n{\alpha}^2\n{x^{-1}t}^4\Nm{K/\QQ}{\di{\ord/\Oc_K}}|\ord x^{-1}/J|^2
\end{eqnarray*}
where $\n{x^{-1}t}^4=\n{t}^4\Nm{K/\QQ}{x}^{-8}$ and 
\[
|\ord x^{-1}/J| = |\ord/Jx|= \Nm{K/\QQ}{\Nm{A/K}{Jx}}=\Nm{K/\QQ}{x}^4\Nm{K/\QQ}{\nrd{A/K}{J}}^2.
\]
Thus
\begin{eqnarray*}
\di{(I,q_\alpha)}
&=& \Delta_K^4\n{\alpha}^2\n{t}^4\Nm{K/\QQ}{\di{\ord/\Oc_K}}\Nm{K/\QQ}{\nrd{A/K}{J}}^4\\
&=& \Delta_K^4\n{\alpha}^2\n{I}^4\Nm{K/\QQ}{\di{\ord/\Oc_K}}
\end{eqnarray*}
and we are left to show that
\[
\Delta_K^4\Nm{K/\QQ}{\di{\ord/\Oc_K}}=\n{\D_{\ord/\ZZ}}^2.
\]
But~\cite[p.221]{Reiner}
\[
\Nm{K/\QQ}{\di{\ord/\Oc_K}}=\Nm{K/\QQ}{\nrd{A/K}{\D_{\ord/\Oc_K}}}^2=\n{\D_{\ord/\Oc_K}}^2=\frac{\n{\D_{\ord/\ZZ}}^2}{\n{\D_{\Oc_K}/\ZZ}^2}
\]
since $\D_{\ord/\ZZ}=\D_{\ord/\Oc_K}\D_{\Oc_K/\ZZ}$.
That~\cite[p.201]{Neukirch}
\[
\Nm{K/\QQ}{\D_{\Oc_K/\ZZ}}=\Delta_K
\]
completes the proof.
\end{proof}

Let $(I^*,q_\alpha)$ be the dual lattice of $(I,q_\alpha)$, that is
\[
I^*=\{x\in I\otimes_\ZZ\RR:q_\alpha(x,y)\in\ZZ\ \forall y\in I\}.
\]

\begin{proposition}\label{prop:duallatticeQA}
The dual of $(I,q_\alpha)$ is given by $(I^*,q_\alpha)$, where
\[
I^*=\alpha^{-1}\D_{\Oc_K/\ZZ}^{-1}\D_{\ord/\Oc_K}^{-1}\bar{I}^{-1}=\alpha^{-1}\D_{\ord/\ZZ}^{-1}\bar{I}^{-1}.
\]
\end{proposition}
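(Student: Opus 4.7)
The plan is to unfold the definition of $I^{*}$ using the identity (\ref{eqn:traceAinAQdef}), reduce the integrality condition to a trace duality between two-sided ideals of $\ord$, and then reassemble using that conjugation and inversion interact well on $\Lc\R(\ord)$.

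I would begin by writing $I = Jt$ and a generic $y \in I$ as $y = jt$ with $j \in J$. Because $\alpha \in K^{\times}$ is central in $A_{\RR}$ and $\overline{jt} = \bar{t}\bar{j}$, the defining condition $q_{\alpha}(x,y) \in \ZZ$ for all $y \in I$ rewrites as
\[
\Trac{K/\QQ}{\trac{A/K}{(\alpha x \bar{t})\,\bar{j}}} \in \ZZ \quad \forall\, j \in J;
\]
setting $u = \alpha x \bar{t}$, this says precisely that $u$ lies in the trace-dual of the set $\bar{J}$.

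Next I would prove the key identity
\[
\bigl\{u \in A_{\RR} : \Trac{K/\QQ}{\trac{A/K}{uv}} \in \ZZ\ \forall v \in M\bigr\} = \D_{\ord/\ZZ}^{-1} M^{-1}
\]
for any $M \in \Lc\R(\ord)$, which generalizes the definition of $\D_{\ord/\ZZ}^{-1}$. One inclusion follows because if $u = dw$ with $d \in \D_{\ord/\ZZ}^{-1}$ and $w \in M^{-1}$, then $wv \in M^{-1}M = \ord$ for $v \in M$, so $\Trac{K/\QQ}{\trac{A/K}{d(wv)}} \in \ZZ$ by definition of $\D_{\ord/\ZZ}^{-1}$. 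For the reverse inclusion, using that $M\ord \subseteq M$, the hypothesis extended to products $vw$ with $v \in M$, $w \in \ord$ says exactly that $uv \in \D_{\ord/\ZZ}^{-1}$ for every $v \in M$; then $u = u \cdot 1 \in u\,MM^{-1} \subseteq \D_{\ord/\ZZ}^{-1}M^{-1}$.

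To apply the identity with $M = \bar{J}$, I would first check that $\bar{J} \in \Lc\R(\ord)$, which reduces to $\bar{\ord} = \ord$. This holds because for any $\beta \in \ord$ we have $\bar{\beta} = \trac{A/K}{\beta} - \beta$ with $\trac{A/K}{\beta} \in \Oc_{K} \subseteq \ord$. Conjugating $JJ^{-1} = \ord$ also gives $\overline{J^{-1}} = \bar{J}^{-1}$, hence $\bar{I}^{-1} = (\bar{t}\bar{J})^{-1} = \bar{J}^{-1}\bar{t}^{-1}$. Assembling the pieces, $\alpha x \bar{t} \in \D_{\ord/\ZZ}^{-1}\bar{J}^{-1}$ unfolds to $x \in \alpha^{-1}\D_{\ord/\ZZ}^{-1}\bar{I}^{-1}$, the first claimed formula; the second follows by substituting $\D_{\ord/\ZZ} = \D_{\ord/\Oc_K}\D_{\Oc_K/\ZZ}$, an identity already used in the proof of Proposition~\ref{prop:latticediQA}. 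The main obstacle is the trace-duality identity for $M \in \Lc\R(\ord)$: although routine in spirit, it rests on the invertibility of two-sided ideals in the Brandt groupoid $\Lc\R(\ord)$ and on the stability of $\ord$ under conjugation, and these facts should be stated carefully rather than used tacitly.
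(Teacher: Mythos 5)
Your argument is correct and follows essentially the same route as the paper: unfold $q_\alpha$, pull the central $\alpha$ and the factor $\bar{t}$ out of the integrality condition, and reduce to the trace-dual of $\bar{J}$. The only difference is that the paper simply cites Reiner for the identity $\{u:\tr{uv}\in\ZZ\ \forall v\in M\}=\D_{\ord/\ZZ}^{-1}M^{-1}$ with $M\in\Lc\R(\ord)$, whereas you supply a (correct) self-contained proof of it; everything else, including obtaining the second formula from $\D_{\ord/\ZZ}=\D_{\ord/\Oc_K}\D_{\Oc_K/\ZZ}$, matches the paper's proof.
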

\begin{proof}
	Since $\{x:x\in A, \tr{xy}\in\ZZ\ \forall y\in\bar{J}\}=\D_{\ord/\ZZ}\bar{J}^{-1}$~\cite[p.217]{Reiner},
	\begin{eqnarray*}
		I^*&=&\{x:x\in I\otimes_\ZZ\RR,\ q_\alpha(x,y)\in\ZZ,\ \ \forall y\in I\}\\
		&=&\{x:\tr{\alpha x\bar{y}}\in\ZZ\ \ \forall y\in Jt\}=\{x:\tr{\alpha xy}\in\ZZ\ \ \forall y\in\bar{t}\bar{J}\}\\
		&=&\alpha^{-1}\D_{\ord/\ZZ}^{-1}\bar{J}^{-1}\bar{t}^{-1}=\alpha^{-1}\D_{\ord/\ZZ}^{-1}\bar{I}^{-1}.
	\end{eqnarray*}
	Also since $\D_{\ord/\ZZ}^{-1}\bar{J}^{-1}\in\Lc\R(\ord)$~\cite[p.217]{Reiner},
\begin{equation}\label{eq:duallatticeproof}
I^*=\D_{\ord/\ZZ}^{-1}\bar{J}^{-1}\alpha^{-1}\bar{t}^{-1}
\end{equation}
is a generalized two-sided ideal of $\ord$ and $(I^*,q_\alpha)$ is indeed an ideal lattice over $\ord$.
\end{proof}

%
%
%

\section{Arakelov-modular Lattices in Totally Definite Quaternion Algebras}
We keep the notations from previous sections.
Let $K$ be a totally real number field with degree $n$, ring of integers $\Oc_K$, and embeddings $\{\sigma_1,\dots,\sigma_n\}$.
Let $A=\QA{a,b}{K}$ be a totally definite quaternion algebra over $K$, and let 
$\ord$ be a maximal order in $A$.

Take $\alpha\in K^\times$ and let $q_\alpha$ be the positive definite $\ZZ-$bilinear form in Eq. (\ref{eqn:bilinearformQA}).
Let $I=Jt$ be a generalized two-sided ideal in $\ord$ with $J\in\Lc\R(\ord)$, $t\in A^\times$.
Then $(I,q_\alpha)$ will denote an ideal lattice over $\ord$.

We first note that $J\in\Lc\R(\ord)$ satisfies $J=\bar{J}$. Indeed, it is known~\cite[p.273]{Reiner} that the nonzero prime ideals $\p$ of $\Oc_K$ and the prime ideals $\P$ of $\ord$ are in one-to-one correspondence given by
\[
\p=\Oc_K\cap\P,\ \ \ \P|\p\ord.
\]
For a prime ideal $\P$ of $\ord$, let then $\p=\P\cap\Oc_K$.
As for any $x\in\P\cap\Oc_K$, $x=\bar{x}$, we have
\[
\bar{\P}\cap\Oc_K=\P\cap\Oc_K=\p,
\]
and it follows that $\bar{\P}=\P$.
But since $\Lc\R(\ord)=\{I:I\text{ an ideal in } A, \Oc_\ell(I)=\Oc_r(I)=\ord\}$ forms an abelian group generated by the prime ideals of $\ord$, $\bar{\P}=\P$ in turn implies $\bar{J}=J$.

Let $\ell$ denote a positive integer.
Let $\N{\ord}$ be the normalizer of $\ord$~\cite[p.199]{Maclachlan}:
\[
	\N{\ord}=\{x\in A^\times:x\ord x^{-1}=\ord\},
\]
which is a group with respect to multiplication. For any $x\in \N{\ord}$, $x\ord=\ord x\in\Lc\R(\ord)$~\cite[p.349]{Reiner}.

We generalize the notion of Arakelov-modular lattice proposed in~\cite{Bayer} for number fields to totally definite quaternion algebras.
\begin{definition}
	We call an ideal lattice $(I,q_\alpha)$ {\em Arakelov-modular of level} $\ell$ if there exists $\beta\in\N{\ord}\cap\ord$, $t\in A^\times$ such that
\[
I=I^*\beta',\hspace{1cm} \ell=\nrd{A/K}{\beta}=\beta\bar{\beta},
\]
where $\beta'=\bar{t}\beta\bar{t}^{-1}$ and $I=Jt$ for some $J\in\Lc\R(\ord)$.
\end{definition}

\begin{remark}
\begin{enumerate}
\item We have
\[
	\beta'\bar{\beta'}=\bar{t}\beta\bar{t}^{-1}t^{-1}\bar{\beta}t=\nrd{A/K}{t}^{-1}\beta\bar{\beta}\nrd{A/K}{t}=\beta\bar{\beta}=\ell,
\]
thus $\ell=\nrd{A/K}{\beta'}=\beta'\bar{\beta'}$.

\item 
An ideal lattice that is Arakelov-modular of level $\ell$ is automatically integral. Indeed,
from Eq. (\ref{eq:duallatticeproof})
\[
	I^*=\D^{-1}_{\ord/\ZZ}\bar{J}^{-1}\alpha^{-1}\bar{t}^{-1},
\]
$\D_{\ord/\ZZ}\bar{J}^{-1}\in\Lc\R(\ord)$, and the fact that $\alpha$ is in the center of $A$, we have~\cite[p.218]{Maclachlan},
\[
	\Oc_r(I^*)=\bar{t}\Oc_r(\D_{\ord/\ZZ}^{-1}\bar{J}^{-1})\bar{t}^{-1}=\bar{t}\ord\bar{t}^{-1}.
\]
Since $\beta\in\ord$, $\beta'\in\bar{t}\ord\bar{t}^{-1}$, showing that $I=I^*\beta'\subseteq I^*$. 

\item 
An  Arakelov-modular lattice $(I,q_\alpha)$ is $\ell-$modular~\cite{Quebbemann}.
Consider the ideal lattice $(I^*,q_{\ell\alpha})$ and the map
\begin{eqnarray*}
	\varphi:I^*&\to&I=I^*\beta'\\
	x&\mapsto& x\beta'.
\end{eqnarray*}
Then $\varphi$ is a $\ZZ-$module homomorphism, and for all $x,y\in I^*$,
\begin{eqnarray*}
q_{\alpha}(\varphi(x),\varphi(y))
&=&q_{\alpha}(x\beta',y\beta')=\tr{\alpha x\beta'\bar{\beta'}\bar{y}} \\
&=&\tr{\alpha x\ell\bar{y}}=\tr{\ell\alpha x\bar{y}}=q_{\ell\alpha}(x,y).
\end{eqnarray*}
\end{enumerate}
\end{remark}
\begin{lemma}\label{lem:existenceQA}
	There exists an Arakelov-modular lattice $(I,q_\alpha)$ of level $\ell$ over $\ord$ if and only if there exists $J\in\Lc\R(\ord)$, $t\in A^\times$, $\alpha\in K$ totally positive, and $\beta\in\N{\ord}\cap\ord$ such that $\ell=\beta\bar{\beta}$ and
\[
J^2=\nrd{A/K}{t}^{-1}\alpha^{-1}\D_{\ord/\ZZ}^{-1}(\beta\ord).
\]
\end{lemma}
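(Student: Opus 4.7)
My plan is to prove both implications by direct manipulation of the defining equation $I = I^*\beta'$, using the explicit formula for the dual lattice provided by Proposition~\ref{prop:duallatticeQA}, together with the observation (made just before the lemma) that every $J \in \Lc\R(\ord)$ satisfies $\bar{J} = J$. The whole statement is essentially a rearrangement of the relation $I = I^*\beta'$ once $I^*$ is expressed in standard form, so the ``only if'' direction will be a chain of substitutions and the ``if'' direction will be the same chain read in reverse.

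For the forward direction, I would start with $I = Jt$ and apply Proposition~\ref{prop:duallatticeQA} and $\bar{J}=J$ to write $I^* = \alpha^{-1}\D_{\ord/\ZZ}^{-1}J^{-1}\bar{t}^{-1}$. Substituting this and $\beta' = \bar{t}\beta\bar{t}^{-1}$ into $Jt = I^*\beta'$, the inner factor $\bar{t}^{-1}\bar{t}$ cancels, giving
\[
Jt = \alpha^{-1}\D_{\ord/\ZZ}^{-1}J^{-1}\beta\bar{t}^{-1}.
\]
Right-multiplying by $\bar{t}$ collects $t\bar{t} = \nrd{A/K}{t} \in K^\times$, which is central and can be moved freely; left-multiplying then by $J$ yields
\[
\nrd{A/K}{t}\,J^2 = \alpha^{-1}\bigl(J\,\D_{\ord/\ZZ}^{-1}J^{-1}\bigr)\beta.
\]

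The crucial algebraic inputs at this step are: the centrality of $\alpha$ and of $\nrd{A/K}{t}$; the fact that $\D_{\ord/\ZZ}$ lies in $\Lc\R(\ord)$ (it is $\D_{\ord/\Oc_K}\D_{\Oc_K/\ZZ}$, a product of a two-sided ideal of $\ord$ and a central ideal), so inside the abelian group $\Lc\R(\ord)$ we have $J\D_{\ord/\ZZ}^{-1}J^{-1} = \D_{\ord/\ZZ}^{-1}$; and $\beta \in \N{\ord}$, which lets us replace the element $\beta$ by the two-sided ideal $\beta\ord = \ord\beta$. Combining these produces exactly
\[
J^2 = \nrd{A/K}{t}^{-1}\alpha^{-1}\D_{\ord/\ZZ}^{-1}(\beta\ord),
\]
while $\ell = \beta\bar{\beta}$ and the total positivity of $\alpha$ come from the hypothesis that $(I,q_\alpha)$ is an Arakelov-modular lattice.

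For the converse, given $J, t, \alpha, \beta$ as in the lemma, I would set $I = Jt$ and $\beta' = \bar{t}\beta\bar{t}^{-1}$ and run the same chain of identities backward to obtain $I = I^*\beta'$, with Proposition~\ref{prop:duallatticeQA} again supplying $I^*$ and the hypothesis on $\alpha$ ensuring $q_\alpha$ is positive definite (Proposition~\ref{prop:bilinearformQA}). The main obstacle is purely bookkeeping in a noncommutative setting: I need to be careful at every step about the order of multiplication, to verify that each quantity being moved past another is genuinely central (scalars in $K^\times$) or two-sided (ideals in $\Lc\R(\ord)$), and to justify the passage between the element $\beta$ and the ideal $\beta\ord$ via the normalizer property. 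Once these commutation points are pinned down, no further substantive argument is required.
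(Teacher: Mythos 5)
Your proposal is correct and follows essentially the same route as the paper's proof: substitute the explicit dual from Proposition~\ref{prop:duallatticeQA} into $I=I^*\beta'$, cancel $\bar{t}$, use the centrality of $\alpha$ and $\nrd{A/K}{t}$, and exploit the abelian group structure of $\Lc\R(\ord)$ together with $\beta\ord=\ord\beta$ to rearrange everything into $J^2=\nrd{A/K}{t}^{-1}\alpha^{-1}\D_{\ord/\ZZ}^{-1}(\beta\ord)$. The only cosmetic difference is that the paper isolates the product $J\beta^{-1}J$ and identifies it with $(\beta^{-1}\ord)J^2$ via a short double-inclusion argument, whereas you commute $J$ past $\D_{\ord/\ZZ}^{-1}$ directly and convert $\beta$ to $\beta\ord$ at the end; both steps rest on exactly the same facts.
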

\begin{proof}
By the above discussions, there exists an Arakelov-modular lattice of level $\ell$ if and only if there exists $\alpha\in K^\times$, totally positive, $t\in A^\times$, $\beta\in\N{\ord}\cap\ord$, $J\in\Lc\R(\ord)$ such that $\ell=\beta\bar{\beta}$ and 
\[
	Jt=I=I^*\beta'=\alpha^{-1}\D_{\ord/\ZZ}^{-1}\bar{J}^{-1}\bar{t}^{-1}\bar{t}\beta\bar{t}^{-1}=\alpha^{-1}\D_{\ord/\ZZ}^{-1}\bar{J}^{-1}\beta\bar{t}^{-1}.
\]
Furthermore, this is equivalent to
\[
	Jt\bar{t}=\alpha^{-1}\D_{\ord/\ZZ}^{-1}J^{-1}\beta,\text{ i.e., }J\nrd{A/K}{t}=\alpha^{-1}\D_{\ord/\ZZ}^{-1}J^{-1}\beta.
\]
Also, $\nrd{A/K}{t}\in K$ which is in the center of $A$, and the above equality reduces to
\begin{equation}\label{eqn:modQAtemp}
	J\beta^{-1} J=\nrd{A/K}{t}^{-1}\alpha^{-1}\D_{\ord/\ZZ}^{-1}.
\end{equation}
Note that as $\beta\in\N{\ord}$,
\[
	\Oc_r(J\beta^{-1})=\beta\Oc_r(J)\beta^{-1}=\beta\ord\beta^{-1}=\ord,
\]
the left hand side of the above equation is well-defined~\cite[p.196]{Reiner}.
As $\beta\in\N{\ord}$, $\beta^{-1}\in\N{\ord}$, so $\beta^{-1}\ord\in\Lc\R(\ord)$.
Since $J\in\Lc\R(\ord)$,
\[
	J(\beta^{-1}\ord)J\subseteq J\beta^{-1}J.
\]
On the other hand,
\[
	J\beta^{-1}J=\left\{\sum_{\text{finite sum}}x\beta^{-1}y:x,y\in J\right\}\subseteq J(\beta^{-1}\ord)J.
\]
Hence Eq. (\ref{eqn:modQAtemp}) is equivalent to
\[
	(\beta^{-1}\ord)J^2=\nrd{A/K}{t}^{-1}\alpha^{-1}\D_{\ord/\ZZ}^{-1},
\]
i.e.,
\[
	J^2=\nrd{A/K}{t}^{-1}\alpha^{-1}\D_{\ord/\ZZ}^{-1}(\beta\ord)
\]
which concludes the proof.
\end{proof}
For any ideal $T\in\Lc\R(\ord)$, $T$ has a factorization~\cite[p.193]{Maclachlan}
\[
	T=\prod_{i=1}^k\P_i^{s_i},
\]
where $\P_i$ are prime ideals of $\ord$ and we write $v_{\P_i}(T)=s_i$.
Using this notation, Lemma \ref{lem:existenceQA} becomes
\begin{lemma}\label{lem:existenceQAeven}
There exists an Arakelov-modular lattice $(I,q_\alpha)$ of level $\ell$ over $\ord$ if and only if there exists $t\in A^\times$, $\alpha\in K$ totally positive, and $\beta\in\N{\ord}\cap\ord$ such that $\ell=\beta\bar{\beta}$ and
	\[
		v_{\P}\left(\nrd{A/K}{t}^{-1}\alpha^{-1}\D_{\Oc_K/\ZZ}^{-1}\D_{\ord/\ZZ}^{-1}(\beta\ord)\right)
	\]
	is even for all prime ideal $\P$ of $\ord$.
\end{lemma}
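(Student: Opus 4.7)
The strategy is to apply Lemma~\ref{lem:existenceQA} to recast the existence question as a square-root problem in the ideal group $\Lc\R(\ord)$, and then exploit the structural fact, recalled from~\cite{Maclachlan} immediately above the statement, that $\Lc\R(\ord)$ is a free abelian group on the prime ideals of $\ord$. In a free abelian group an element is a square if and only if each generator occurs with even multiplicity, so the whole proof is essentially a translation between these two characterizations.

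The first step is to invoke Lemma~\ref{lem:existenceQA}: an Arakelov-modular lattice of level $\ell$ exists iff there exist $t \in A^\times$, totally positive $\alpha \in K$, and $\beta \in \N{\ord} \cap \ord$ with $\ell = \beta\bar{\beta}$, together with some $J \in \Lc\R(\ord)$ satisfying
\[
J^2 = \nrd{A/K}{t}^{-1}\alpha^{-1}\D_{\ord/\ZZ}^{-1}(\beta\ord).
\]
Call the right-hand side $T$. The problem then reduces to characterising when $T$ has a square root in $\Lc\R(\ord)$.

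The second step is to verify that $T$ indeed lies in $\Lc\R(\ord)$: the central scalars $\nrd{A/K}{t}^{-1}, \alpha^{-1} \in K^\times$ multiply $\ord$ into $\Lc\R(\ord)$; by~(\ref{eqn:differentordok}), $\D_{\ord/\Oc_K}^{-1} \in \Lc\R(\ord)$, and combined with the identity $\D_{\ord/\ZZ} = \D_{\ord/\Oc_K}\D_{\Oc_K/\ZZ}$ this gives $\D_{\ord/\ZZ}^{-1} \in \Lc\R(\ord)$; finally, $\beta\ord \in \Lc\R(\ord)$ because $\beta \in \N{\ord}$. Unique factorization in $\Lc\R(\ord)$ then converts the equation $J^2 = T$ into the evenness of $v_\P(T)$ at every prime $\P$ of $\ord$, which is the content of the statement.

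The main obstacle I anticipate is a careful bookkeeping step, namely passing between the form $\D_{\ord/\ZZ}^{-1}$ delivered by Lemma~\ref{lem:existenceQA} and the form $\D_{\Oc_K/\ZZ}^{-1}\D_{\ord/\ZZ}^{-1}$ displayed in the statement, using $\D_{\ord/\ZZ}^{-1} = \D_{\ord/\Oc_K}^{-1}\D_{\Oc_K/\ZZ}^{-1}$ and tracking how a prime $\p$ of $\Oc_K$ sits below the primes $\P$ of $\ord$ (with $\p\ord = \P$ or $\P^2$ according to whether $A$ is unramified or ramified at $\p$). Once this translation between the valuations at $\p$ and at $\P$ is written down, the rest of the argument is a direct invocation of the free abelian group structure.
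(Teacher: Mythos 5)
Your overall strategy coincides with the paper's: Lemma \ref{lem:existenceQAeven} is presented there as nothing more than Lemma \ref{lem:existenceQA} rewritten in the valuation notation $v_\P$, justified by the observation (which you also make) that the ideal in question lies in $\Lc\R(\ord)$, a free abelian group on the prime ideals of $\ord$, where being a square is equivalent to every exponent being even. That part of your argument is correct and is exactly what the paper intends.

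The problem is the step you defer as ``careful bookkeeping.'' Lemma \ref{lem:existenceQA} produces the ideal $T=\nrd{A/K}{t}^{-1}\alpha^{-1}\D_{\ord/\ZZ}^{-1}(\beta\ord)$, while the statement asks for evenness of $v_\P$ applied to $\D_{\Oc_K/\ZZ}^{-1}T$. These two parity conditions are not equivalent in general: for an ideal $\a$ of $\Oc_K$ one has $v_\P(\a\ord)=m_\p v_\p(\a)$, so $v_\P(\D_{\Oc_K/\ZZ}^{-1}\ord)=-m_\p v_\p(\D_{\Oc_K/\ZZ})$, which is odd at any prime $\P$ lying over a prime $\p$ at which $A$ is unramified (so $m_\p=1$) and $v_\p(\D_{\Oc_K/\ZZ})$ is odd. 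At such a $\P$ the evenness of $v_\P(T)$ and of $v_\P(\D_{\Oc_K/\ZZ}^{-1}T)$ disagree, and since $\D_{\Oc_K/\ZZ}$ need not be principal with a totally positive generator, the discrepancy cannot in general be absorbed into the existential quantifiers over $\alpha$ and $t$. So the translation you postpone is not routine: either the extra factor $\D_{\Oc_K/\ZZ}^{-1}$ in the statement is an error (in which case your argument proves the corrected statement, with $\D_{\ord/\ZZ}^{-1}$ alone), or an additional argument is required that your proposal does not supply. For concreteness, note that $\D_{\Oc_K/\ZZ}^{-1}\D_{\ord/\ZZ}^{-1}=\D_{\Oc_K/\ZZ}^{-2}\D_{\ord/\Oc_K}^{-1}$ and $\D_{\Oc_K/\ZZ}^{-2}$ extends to $\ord$ with all even valuations, so the stated condition amounts to evenness of $v_\P\bigl(\nrd{A/K}{t}^{-1}\alpha^{-1}\D_{\ord/\Oc_K}^{-1}(\beta\ord)\bigr)$ --- which is the form the paper uses later (Remark \ref{rm:existenceQAoif}) --- whereas Lemma \ref{lem:existenceQA} delivers $\D_{\ord/\Oc_K}^{-1}$ accompanied by one further copy of $\D_{\Oc_K/\ZZ}^{-1}$. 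You should either reconcile these or flag the mismatch explicitly; over $K=\QQ$ (the case the paper ultimately exploits) the issue disappears because $\D_{\Oc_K/\ZZ}=\ZZ$.
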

Note that as $\nrd{A/K}{t}^{-1}\alpha^{-1}\D_{\Oc_K/\ZZ}^{-1}\D_{\ord/\ZZ}^{-1}(\beta\ord)\in\Lc\R(\ord)$, its factors are prime ideals which are in $\Lc\R(\ord)$ and hence the product of those prime ideals is still an ideal in $\Lc\R(\ord)$.
\begin{remark}\label{rm:Classify}
	\begin{enumerate}
		\item If we have two quaternion algebras over $K$, $A$ and $A'$ that both ramify at the same finite and infinite places over $K$, there exists a $K-$algebra isomorphism $\varphi:A\to A'$~\cite[p.100]{Maclachlan}.
	If we have an ideal lattice $(I,q_\alpha)$ over some maximal order $\ord$ in $A$, we can construct an ideal lattice $(I',q_{\alpha'})$ over the maximal order $\varphi(\ord)$ in $A'$ such that $(I,q_\alpha)$ and $(I',q_\alpha')$ are isomorphic.
	Conversely, if we have an ideal lattice $(I',q_{\alpha'})$ over some maximal order $\ord'$ in $A'$, we can construct an ideal lattice $(I,q_\alpha)$ over $A$ that is isomorphic to $(I',q_{\alpha'})$. 
\item Take two maximal orders $\ord$ and $\ord'$ in a quaternion algebra $A$ over $K$ that are conjugate to each other, i.e. there exists $u\in A^\times$ such that $\ord'=u\ord u^{-1}$.
	If we have an ideal lattice $(Jt,q_\alpha)$ over $\ord$, $(uJu^{-1}t,q_\alpha)$ will be an ideal lattice over $\ord'$.
	Consider the map
	\begin{eqnarray*}
		\psi:Jt&\to&uJu^{-1}t\\
		xt&\mapsto&uxu^{-1}t,
	\end{eqnarray*}
	we have
	\begin{eqnarray*}
		\trac{A/K}{\alpha uxu^{-1}t\overline{(uyu^{-1}t)}}&=&\trac{A/K}{\alpha uxu^{-1}t\bar{t}\bar{u}^{-1}\bar{y}\bar{u}}\\
		&=&\trac{A/K}{\alpha\nrd{A/K}{t}\nrd{A/K}{u}^{-1}ux\bar{y}\bar{u}}\\
		&=&\trac{A/K}{\alpha\nrd{A/K}{t}\nrd{A/K}{u}^{-1}\bar{u}ux\bar{y}}\\
		&=&\trac{A/K}{\alpha xt\overline{yt}}.
	\end{eqnarray*}
	Thus $(Jt,q_\alpha)$ and $(uJu^{-1}t,q_\alpha)$ are isomorphic.
\end{enumerate}
\end{remark}

Write $\ell=\ell_1^2\ell_2$, where $\ell_1,\ell_2\in\ZZ_{>0}$ and $\ell_2$ is square-free.
Thanks to the following proposition, we will first focus on the case when $\ell$ is square-free.
\begin{proposition}
If there exists an Arakelov-modular lattice of level $\ell_2$ over $\ord$, then there exists an Arakelov-modular lattice of level $\ell$ over $\ord$.
\end{proposition}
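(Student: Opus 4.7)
The plan is to start from data for level $\ell_2$ furnished by Lemma~\ref{lem:existenceQA} and rescale it by the central rational integer $\ell_1$. Concretely, I would first apply Lemma~\ref{lem:existenceQA} to the hypothesis to obtain $J\in\Lc\R(\ord)$, $t\in A^\times$, $\alpha\in K$ totally positive, and $\beta\in\N{\ord}\cap\ord$ satisfying $\beta\bar\beta=\ell_2$ and
\[
J^2=\nrd{A/K}{t}^{-1}\alpha^{-1}\D_{\ord/\ZZ}^{-1}(\beta\ord).
\]
I then define the candidate level-$\ell$ data by
\[
\beta_\ell:=\ell_1\beta,\quad \alpha_\ell:=\alpha/\ell_1,\quad t_\ell:=t,\quad J_\ell:=\ell_1 J,
\]
and aim to verify that this quadruple satisfies the conditions of Lemma~\ref{lem:existenceQA} for level $\ell=\ell_1^2\ell_2$, which will give the desired Arakelov-modular lattice.

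The checks of the membership conditions all rest on the observation that $\ell_1\in\ZZ_{>0}$ is a central element of $A^\times$. Centrality of $\ell_1$ implies $\ell_1\ord\ell_1^{-1}=\ord$, so $\ell_1\in\N{\ord}$ and hence $\beta_\ell=\ell_1\beta\in\N{\ord}\cap\ord$ because $\N{\ord}$ is a group and $\ord$ is a subring of $A$. Similarly $\Oc_\ell(\ell_1 J)=\Oc_\ell(J)=\ord$ and $\Oc_r(\ell_1 J)=\Oc_r(J)=\ord$, so $J_\ell\in\Lc\R(\ord)$. Since $\ell_1>0$, the element $\alpha_\ell=\alpha/\ell_1\in K$ remains totally positive. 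Finally, $\ell_1$ is fixed by the canonical involution, so $\beta_\ell\bar{\beta_\ell}=\ell_1^2\beta\bar\beta=\ell_1^2\ell_2=\ell$.

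The main computation is then to check the squared-ideal equation of Lemma~\ref{lem:existenceQA} at level $\ell$. Centrality of $\ell_1$ again gives $J_\ell^2=\ell_1^2J^2$, so by the hypothesis
\[
J_\ell^2=\ell_1^2\,\nrd{A/K}{t}^{-1}\alpha^{-1}\D_{\ord/\ZZ}^{-1}(\beta\ord),
\]
while on the other side one factor of $\ell_1$ comes from $\alpha_\ell^{-1}=\ell_1\alpha^{-1}$ and another from $\beta_\ell\ord=\ell_1(\beta\ord)$, so
\[
\nrd{A/K}{t_\ell}^{-1}\alpha_\ell^{-1}\D_{\ord/\ZZ}^{-1}(\beta_\ell\ord)=\ell_1^2\,\nrd{A/K}{t}^{-1}\alpha^{-1}\D_{\ord/\ZZ}^{-1}(\beta\ord),
\]
matching $J_\ell^2$. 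Lemma~\ref{lem:existenceQA} applied in the reverse direction then produces the Arakelov-modular lattice of level $\ell$ over $\ord$. There is no serious obstacle here: the content of the proposition is simply that the square part $\ell_1^2$ of $\ell$ can be absorbed symmetrically into the central data $\alpha$ and $\beta\ord$, and this absorption preserves the defining equation on the nose because $\ell_1\in\ZZ$ commutes with everything in sight.
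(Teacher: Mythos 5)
Your proof is correct and takes essentially the same route as the paper: both directions of Lemma~\ref{lem:existenceQA} are applied, and the square factor $\ell_1^2$ of $\ell$ is absorbed by rescaling the data with the central integer $\ell_1$. The only cosmetic difference is how the rescaling is distributed --- the paper keeps $J$ and $t$ fixed and replaces $\alpha$ by $\ell_1\alpha$ so that the two factors of $\ell_1$ cancel inside the defining equation, whereas you replace $\alpha$ by $\alpha/\ell_1$ and $J$ by $\ell_1 J$; the resulting lattices are isometric via $x\mapsto\ell_1 x$.
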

\begin{proof}
	Let $(Jt,q_\alpha)$ be an Arakelov-modular lattice of level $\ell_2$ over $\ord$. 
	By Lemma \ref{lem:existenceQA}, there exists $J\in\Lc\R(\ord)$ and $t\in A^\times$, $\alpha\in K$ totally positive, and $\beta\in\N{\ord}\cap\ord$ such that $\ell_2=\beta\bar{\beta}$ and
	\[
		J^2=\nrd{A/K}{t}^{-1}\alpha^{-1}\D_{\ord/\ZZ}^{-1}(\beta\ord).
	\]
	Let $\tilde{\beta}=\ell_1\beta$, then $\ell=\tilde{\beta}\bar{\tilde{\beta}}$, $\ell_1\beta\in\N{\ord}\cap\ord$ and
\[
	J^2=\nrd{A/K}{t}^{-1}(\ell_1\alpha)^{-1}\D_{\ord/\ZZ}^{-1}(\ell_1\beta\ord)=\nrd{A/K}{t}^{-1}(\ell_1\alpha)^{-1}\D_{\ord/\ZZ}^{-1}(\tilde{\beta}\ord).
\]
As $\ell_1\in\ZZ$, $\ell_1\alpha\in K$ is totally positive.
By Lemma \ref{lem:existenceQA} again, $(Jt,q_{\ell_1\alpha})$ is an Arakelov-modular lattice of level $\ell$.
\end{proof}
So from now on, we consider $\ell$ to be a square-free positive integer unless otherwise stated.

%
%
%
\subsection{Galois extensions}
For the rest of the paper, we suppose that $K$ is a totally real number field which is Galois with Galois group $G$.

For $p\in\ZZ$ a prime, we write $\p|p$ to denote that $\p$ is a prime ideal in $\Oc_K$ above $p$.
Similarly, for $\p$ a prime ideal in $\Oc_K$, we write $\P|\p$ to denote that $\P$ is the prime ideal of $\ord$ 
such that~\cite[p.273]{Reiner} $\p=\Oc_K\cap\P,\ \ \ \P|\p\ord$.
Let Ram$(A)$, Ram$_\infty(A)$ and Ram$_f(A)$ denote the set of places, finite places, and infinite places respectively, at which $A$ is ramified.

Suppose $\ell=\prod_{i=1}^k p_i$, where $p_i\in\ZZ$ are prime numbers.
Then
\[
	\ell\Oc_K=\prod_{i=1}^k\left(\prod_{j=1}^{g_i} \p_{ij}^{e_{p_i}}\right),
	~p_i\Oc_K=\prod_{j=1}^{g_i}\p_{ij}^{e_{p_i}}.
\]

We have~\cite[p.194]{Reiner}
\begin{equation}\label{eqn:llam}
	\ell\ord=\prod_{i=1}^k\left(\prod_{j=1}^{g_i}\P_{ij}^{e_{p_i}m_{\p_{ij}}}\right),
\end{equation}
where $\P_{ij}$ is the prime ideal above $\p_{ij}$ in $\ord$ and $m_{\p_{ij}}$ is the \emph{local index}~\cite[p.270]{Reiner} of $A$ at $\p_{ij}$, which takes value $2$ if $A$ ramifies at $\p_{ij}$, and $1$ otherwise.
Assume there exists $\beta\in\N{\ord}\cap\ord$ that satisfies $\ell=\beta\bar{\beta}$.
As $\beta\ord=\ord\beta\in\Lc\R(\ord)$, $\beta\ord=\overline{\beta\ord}=\ord\bar{\beta}$, so
\[
	(\beta\ord)^2=\ord\bar{\beta}\beta\ord=\ell\ord,
\]
which gives
\begin{equation}\label{eqn:betalam}
	\beta\ord=\prod_{i=1}^k\left(\prod_{j=1}^{g_i}\P_{ij}^{\frac{e_{p_i}m_{\p_{ij}}}{2}}\right).
\end{equation}

\begin{remark}
1. If $e_{p_i}$ is odd, then $\p_{ij}\in\text{Ram}_f(A)$ for all $j$, i.e., $\forall\p_{ij}|p_i$, $\p_{ij}$ is ramified.

2. Moreover, for any prime ideal $\P$ of $\ord$
\[
	v_{\P}(\beta\ord)=\frac{1}{2}v_\P(\ell\ord)=\frac{1}{2}v_{\P}(p\ord)=\frac{e_pm_\p}{2},
\]
where $\p=\P\cap\Oc_K$, $p=\p\cap\ZZ$.
\end{remark}
Now consider $p\in\ZZ$ such that there exists $\p|p$ which is ramified, i.e. $m_\p=2$.
Then for $\P|\p$, 
\[
	v_\P(\nrd{A/K}{t}^{-1}\alpha^{-1}\D_{\Oc_K/\ZZ}^{-1})
\]	
is even and $v_{\P}(\D_{\ord/\Oc_K}^{-1})=v_{\P}(( \prod_{\p\in\text{Ram}_f(A)}\P )^{-1}) =-1$~\cite[p.273]{Reiner}.
Thus to have an Arakelov-modular lattice, by Lemma \ref{lem:existenceQAeven}, we must have $p|\ell$ and $v_\P(\beta)=\frac{e_{p}m_{\p}}{2}=e_p$ is odd. Then by the above remark, for all $\p|p$, $\p\in\text{Ram}_f(A)$.
Define
\begin{eqnarray*}
	S_{\text{Ram}}&:=&\{p\in\ZZ|\text{ there exists }\ \p\text{ above }p\text{ such that }\p\in\text{Ram}_f(A)\}.\\
	\Omega(K)&:=&\{p\in\ZZ|\ p\text{ is a prime that ramifies in }K/\QQ\}.\\
	\Omega'(K)&:=&\{p\in\Omega(K)\text{ and }e_p\text{ is even}\}.
\end{eqnarray*}

To summarize:
\begin{lemma}\label{lem:existenceQAoif}
If there exists an Arakelov-modular lattice of level $\ell$ over $\ord$, then 	
	\[
		\ell = \prod_{p\in S_{\text{Ram}}}p\prod_{p\in\Omega''(K)}p,
	\]
where $\Omega''(K)$ is a subset of $\Omega'(K)$.

Moreover, for all $p\in S_{\text{Ram}}$, the two conditions

1. $e_p$ is odd, i.e. $S_{\text{Ram}}\cap\Omega'(K)=\emptyset$;

2. $\forall\p|p$, $\p\in\text{Ram}_f(A)$,

are equivalent to, for all $\p\in\text{Ram}_f(A)$,

a. $e(\p|p)$ is odd, where $p=\p\cap\ZZ$;

b. $\sigma(\p)\in\text{Ram}_f(A)$ for all $\sigma\in G$, the Galois group of $K/\QQ$.
\end{lemma}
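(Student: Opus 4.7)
The plan is to split the argument into two parts corresponding to the two assertions of the lemma.

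\textbf{Factorization of $\ell$.} Since $\ell$ is square-free, I would write it as a product of distinct rational primes and classify each prime divisor. Everything needed for the case $p \in S_{\text{Ram}}$ is already established in the discussion immediately preceding the lemma: if some $\p \mid p$ lies in $\text{Ram}_f(A)$, then Lemma~\ref{lem:existenceQAeven} combined with $v_\P(\D_{\ord/\Oc_K}^{-1}) = -1$ forces $p \mid \ell$, $e_p$ odd, and every $\p \mid p$ to be in $\text{Ram}_f(A)$. For the remaining prime divisors $p \mid \ell$ with $p \notin S_{\text{Ram}}$, I would invoke Eq.~(\ref{eqn:betalam}): for any $\P \mid \p \mid p$ one has $v_\P(\beta\ord) = e_p m_\p / 2 = e_p/2$ since $m_\p = 1$. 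Because $\beta \in \ord$ forces $\beta\ord$ to be an integral ideal, this exponent must be a nonnegative integer, so $e_p$ is even, i.e.\ $p \in \Omega'(K)$. Taking $\Omega''(K) := \{p \in \Omega'(K) : p \mid \ell\}$, the sets $S_{\text{Ram}}$ and $\Omega''(K)$ are disjoint and together exhaust the prime divisors of $\ell$, yielding the stated factorization.

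\textbf{Equivalence of conditions.} This is a purely Galois-theoretic bookkeeping step relying on two standard consequences of $K/\QQ$ being Galois with group $G$: the group $G$ acts transitively on the primes of $\Oc_K$ above any rational prime $p$, and all such primes share a common ramification index $e_p$. For $(1,2) \Rightarrow (a,b)$: given $\p \in \text{Ram}_f(A)$ above $p = \p \cap \ZZ$, one has $p \in S_{\text{Ram}}$, so condition~1 gives $e(\p|p) = e_p$ odd (that is (a)), and condition~2 says every prime above $p$ lies in $\text{Ram}_f(A)$, in particular $\sigma(\p)$ for any $\sigma \in G$ (that is (b)). For $(a,b) \Rightarrow (1,2)$: take $p \in S_{\text{Ram}}$, pick any $\p_0 \in \text{Ram}_f(A)$ above $p$; condition~(a) gives $e_p = e(\p_0 | p)$ odd, proving~1. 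Any other $\p \mid p$ equals $\sigma(\p_0)$ for some $\sigma \in G$ by transitivity, and condition~(b) applied to $\p_0$ places $\sigma(\p_0) = \p$ in $\text{Ram}_f(A)$, proving~2.

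The main obstacle here is conceptual rather than technical: nearly all work is either already present in the paragraphs preceding the lemma (for the factorization) or a direct application of Galois transitivity (for the equivalence). The only subtle point to watch is that in the first part, one must correctly combine the necessary conditions drawn from Lemma~\ref{lem:existenceQAeven} with the fact that $m_\p \in \{1,2\}$ to produce the clean dichotomy $p \in S_{\text{Ram}}$ versus $p \in \Omega'(K)$, ensuring these two possibilities are mutually exclusive and jointly exhaustive for $p \mid \ell$.
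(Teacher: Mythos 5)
Your proposal is correct and follows essentially the same route as the paper, which presents this lemma as a summary of the discussion immediately preceding it (integrality of the exponents $e_p m_\p/2$ in the factorization of $\beta\ord$ forcing the dichotomy between $S_{\text{Ram}}$ and $\Omega'(K)$, and the parity/ramification constraints at primes of $S_{\text{Ram}}$ coming from Lemma~\ref{lem:existenceQAeven} together with $v_\P(\D_{\ord/\Oc_K}^{-1})=-1$). The Galois-transitivity argument for the equivalence of conditions, which the paper leaves implicit, is supplied correctly in your write-up.
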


\begin{remark}\label{rm:existenceQAoif}
1. Note that the above Lemma implies that if there exists an Arakelov-modular lattice of level $\ell$ over $\ord$, then we must have $\di{A}|\ell\Oc_K$, for 
\[
\di{A}=\prod_{\p\in\text{Ram}_f(A)}\p
\]
the reduced discriminant of $A$~\cite[p.99]{Maclachlan}.

2. For a totally real Galois field $K$, a quaternion algebra $A$ over $K$, a maximal order $\ord$ of $A$ and a positive integer $\ell$ satisfying the conditions in the above lemma, we have
	\[
		\beta\ord=\prod_{p\in S_{\text{Ram}}}\left(\prod_{\p_i|p}\P_i\right)^{e_p}\prod_{p\notin S_{\text{Ram}},p|\ell}\left(\prod_{\p_i|p}\P_i\right)^{\frac{e_p}{2}},\ \ 
		\D_{\ord/\Oc_K}^{-1}=\prod_{p\in S_{\text{Ram}}}\left(\prod_{\p_i|p}\P_i\right)^{-1},
	\]
	where $\P_i|\p_i$ and $\p_i|p$.
	Then
	\[
		\D_{\ord/\Oc_K}^{-1}(\beta\ord)=\prod_{p\in S_{\text{Ram}}}\left(\prod_{\p_i|p}\P_i\right)^{e_p-1}\prod_{p\notin S_{\text{Ram}},p|\ell}\left(\prod_{\p_i|p}\P_i\right)^{\frac{e_p}{2}}.
	\]
\end{remark}

\subsection{Galois extensions of odd degree}

A direct corollary of Lemma \ref{lem:existenceQAeven} is obtained when $K$ is of odd degree.
\begin{corollary}\label{cor:existenceQAeven}
	If $n=[K:\QQ]$ is odd and there exists an Arakelov-modular lattice of level $\ell$ over $\ord$, then
	\[
		\ell=\prod_{p\in S_{\text{Ram}}}p.
	\]
\end{corollary}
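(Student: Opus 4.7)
The plan is to deduce this corollary directly from Lemma \ref{lem:existenceQAoif}, which already expresses any admissible level as
\[
\ell = \prod_{p \in S_{\text{Ram}}} p \cdot \prod_{p \in \Omega''(K)} p, \qquad \Omega''(K) \subseteq \Omega'(K) = \{p \in \Omega(K) : e_p \text{ is even}\}.
\]
So the only thing to verify is that the hypothesis $n = [K:\QQ]$ odd forces $\Omega'(K) = \emptyset$, which will make the second product disappear.

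Since we are in the Galois setting of the current subsection, for every rational prime $p$ the ramification indices of all prime ideals of $\Oc_K$ above $p$ coincide; call this common value $e_p$. The standard identity $e_p f_p g_p = n$ then implies $e_p \mid n$. If $n$ is odd, every divisor of $n$ is odd, so $e_p$ is odd for every $p \in \Omega(K)$. Hence $\Omega'(K)$, the set of ramified primes with $e_p$ even, is empty, and therefore so is $\Omega''(K) \subseteq \Omega'(K)$.

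Substituting into the formula from Lemma \ref{lem:existenceQAoif} yields $\ell = \prod_{p \in S_{\text{Ram}}} p$, as claimed. There is no real obstacle here; the proof is essentially a one-line divisibility observation, and the only care required is to invoke the Galois hypothesis so that $e_p$ is well-defined and the identity $e_p f_p g_p = n$ applies.
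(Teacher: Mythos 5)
Your proof is correct and follows essentially the same route as the paper: the paper's own (terser) argument also reduces to the observation that $n$ odd forces every ramification index $e_p$ to be odd, so no prime outside $S_{\text{Ram}}$ can divide $\ell$. The only cosmetic difference is that you invoke the packaged product formula of Lemma~\ref{lem:existenceQAoif} and make the divisibility $e_p \mid n$ explicit, whereas the paper cites Lemma~\ref{lem:existenceQAeven} directly and leaves that step implicit.
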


\begin{proof}
	Since $n=[K:\QQ]$ is odd, by Lemma \ref{lem:existenceQAeven}, if $p\notin S_{\text{Ram}}$ then $p\nmid\ell$.
	Using Lemma \ref{lem:existenceQAeven} again we have
	\[
		\ell=\prod_{p\in S_{\text{Ram}}}p.
	\]
\end{proof}
\begin{remark}\label{rm:existenceQAoifodddegree}
	If $\ell=\prod_{p\in S_{\text{Ram}}}p$, by Remark \ref{rm:existenceQAoif},
	\[
\D_{\ord/\Oc_K}^{-1}(\beta\ord)=\prod_{p\in S_{\text{Ram}}}\left(\prod_{\p_i|p}\P_i\right)^{e_p-1}.
	\]
	By Lemma \ref{lem:existenceQAoif} $e_p$ is odd, then $v_\P(\D_{\ord/\Oc_K}^{-1}(\beta\ord))$ is even for any $\P$ a prime ideal in $\ord$.
\end{remark}

%
%
%
\section{Totally Definite Quaternion Algebras over $K=\QQ$}
Let $A=\QA{a,b}{\QQ}$ be a totally definite quaternion algebra over $\QQ$.
$\QQ_p$ will denote the completion of $\QQ$ at the non-Archimedean evaluation corresponding to the prime integer $p$ \cite{Neukirch}.
As there is only one infinite place, the identity, and~\cite[p.93]{Maclachlan}
\[
	\QA{a,b}{\QQ}\otimes_\QQ\RR\cong\QA{a,b}{\RR},
\]
$A$ is totally definite iff $a<0$ and $b<0$~\cite[p.92]{Maclachlan}.
Note that since the cardinality of Ram$(A)$ is even~\cite[p.99]{Maclachlan}, there are an odd number of finite places where $A$ is ramified at, i.e., Ram$_f(A)$ has odd cardinality.
Moreover, $S_{\text{Ram}}=\text{Ram}_f(A)$ for $K=\QQ$.

\begin{proposition}\label{prop:QAoverQ}
	There exists an Arakelov-modular lattice of level $\ell$ over $\ord$ if and only if $\ell=\displaystyle\prod_{p\in\text{Ram}_f(A)}p$ and there exists $\beta\in\N{\ord}\cap\ord$ such that $\ell=\beta\bar{\beta}$.
\end{proposition}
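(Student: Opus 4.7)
The plan is to derive both directions directly from the general machinery in Section 4, specialized to $K=\QQ$. The key simplifications are that $\Oc_K=\ZZ$, so $\D_{\Oc_K/\ZZ}=\ZZ$; the degree $n=1$ is odd; every prime $p$ of $\ZZ$ has $e_p=1$ and exactly one prime $\p=p\ZZ$ above it in $\Oc_K$; and consequently $\Omega(\QQ)=\Omega'(\QQ)=\emptyset$ and $S_{\mathrm{Ram}}=\mathrm{Ram}_f(A)$.

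For the forward direction, I would invoke Corollary~\ref{cor:existenceQAeven} (which applies since $n=1$ is odd) to conclude immediately that $\ell=\prod_{p\in S_{\mathrm{Ram}}}p=\prod_{p\in\mathrm{Ram}_f(A)}p$. The existence of $\beta\in\N{\ord}\cap\ord$ with $\ell=\beta\bar\beta$ is literally part of the definition of Arakelov-modular of level $\ell$, so nothing further is needed here.

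For the backward direction, I would apply the existence criterion in Lemma~\ref{lem:existenceQAeven} with the simplest possible choice of auxiliary data: $t=1$ and $\alpha=1$ (which is totally positive over $\QQ$). Since $\D_{\Oc_K/\ZZ}=\ZZ$ and $\nrd{A/K}{t}^{-1}\alpha^{-1}=1$, the ideal whose valuations must be even reduces to $\D_{\ord/\Oc_K}^{-1}(\beta\ord)$. Using Remark~\ref{rm:existenceQAoifodddegree} (or equivalently the formula in Remark~\ref{rm:existenceQAoif}(2)), since all $e_p=1$ for $K=\QQ$, one checks that every exponent in the factorization $\D_{\ord/\Oc_K}^{-1}(\beta\ord)=\prod_{p\in S_{\mathrm{Ram}}}\bigl(\prod_{\p_i|p}\P_i\bigr)^{e_p-1}$ is $0$, so in fact $\D_{\ord/\Oc_K}^{-1}(\beta\ord)=\ord$. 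Then $v_{\P}$ of this ideal is $0$, which is even for every prime $\P$ of $\ord$, and Lemma~\ref{lem:existenceQAeven} delivers the desired Arakelov-modular lattice.

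There is essentially no genuine obstacle here since $K=\QQ$ collapses the general framework: both the different $\D_{\Oc_K/\ZZ}$ and the ramification indices $e_p$ become trivial. The only point that needs a careful sentence is the identification $S_{\mathrm{Ram}}=\mathrm{Ram}_f(A)$ and the observation that condition~(b) of Lemma~\ref{lem:existenceQAoif} (Galois-stability of $\mathrm{Ram}_f(A)$) is vacuous because the Galois group of $\QQ/\QQ$ is trivial; this is what makes the hypothesis of the proposition as clean as possible.
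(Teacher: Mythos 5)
Your proposal is correct and follows essentially the same route as the paper: the forward direction via Corollary~\ref{cor:existenceQAeven} (with $n=1$ odd and $S_{\mathrm{Ram}}=\mathrm{Ram}_f(A)$), and the backward direction by taking $t=1$, $\alpha=1$ and using Remark~\ref{rm:existenceQAoifodddegree} to see that $\D_{\ord/\ZZ}^{-1}(\beta\ord)=\ord$, so that Lemma~\ref{lem:existenceQA} (equivalently the valuation form in Lemma~\ref{lem:existenceQAeven}) applies with $J=\ord$, yielding $(\ord,q_1)$. No gaps.
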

\begin{proof}
	Take $\ell=\prod_{p\in\text{Ram}_f(A)}p$ and $\beta\in\N{\ord}\cap\ord$ such that $\ell=\beta\bar{\beta}$.
	By Remark \ref{rm:existenceQAoifodddegree},
\[
	\D_{\ord/\ZZ}^{-1}(\beta\ord)=\prod_{p\in\text{Ram}_f(A)}\P^{1-1}=\ord.
\]
By Lemma \ref{lem:existenceQA}, $(\ord,q_{1})$ is an Arakelov-modular lattice of level $\ell$. 

By Corollary \ref{cor:existenceQAeven} and Lemma \ref{lem:existenceQAoif}, the proof is completed.
\end{proof}

\subsection{Existence and classification for $\ell$ prime.}
Now consider $\ell$ being a prime integer.
Our goal is to derive existence results and classify Arakelov-modular lattices for primes $\ell$.

The above proposition and Remark \ref{rm:Classify} show that for each $\ell$, it suffices to consider one quaternion $A$ that ramifies at only $\ell$.
Since we are looking at quaternions over the rational field, all maximal orders in $A$ are conjugate to each other~\cite[p.211]{Maclachlan}.
By Remark \ref{rm:Classify} again, for each quaternion $A$ we are analyzing, it suffices to consider just one maximal order $\ord$ in $A$.
Moreover, we have
\begin{proposition}
	Take $\ell$ a prime integer, $A$ a quaternion algebra over $\QQ$ that ramifies only at $\ell$ and $\ord$ a maximal order of $A$.
	Any Arakelov-modular lattice of level $\ell$ over $\ord$ is isomorphic to the lattice $(\ord,q_1)$, which is an even lattice with minimum $2$ and dimension $4$.
\end{proposition}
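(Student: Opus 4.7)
The plan is to show that every Arakelov-modular lattice of level $\ell$ over $\ord$ reduces, after principalization and rescaling, to $(\ord, q_1)$, and then to compute the invariants of the latter directly.

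Suppose $(I = Jt, q_\alpha)$ is an Arakelov-modular lattice of level $\ell$ over $\ord$. By Lemma \ref{lem:existenceQA}, there exist $\beta \in \N{\ord} \cap \ord$ with $\beta\bar\beta = \ell$, a totally positive $\alpha \in \QQ$, and $t \in A^\times$ such that $J^2 = \n{t}^{-1}\alpha^{-1}\D_{\ord/\ZZ}^{-1}(\beta\ord)$. Repeating the reduction from the proof of Proposition \ref{prop:QAoverQ}, since $A$ ramifies only at $\ell$ we have $\D_{\ord/\ZZ}^{-1}(\beta\ord) = \ord$, hence $J^2 = \n{t}^{-1}\alpha^{-1}\ord$.

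The crux is the principalization of $J$. The abelian group $\Lc\R(\ord)$ is generated by the prime ideals of $\ord$: for each rational prime $p \neq \ell$, $A$ is unramified at $p$ so the prime above $p$ is the principal ideal $p\ord$; and Eq.~(\ref{eqn:betalam}) specialized to $K = \QQ$ with the single prime $\ell$ yields $\P_\ell = \beta\ord$, again principal. Consequently, any $J \in \Lc\R(\ord)$ has the form $J = c\beta^n\ord$ with $c \in \QQ_{>0}$ and $n \in \ZZ$. Setting $s = c\beta^n t \in A^\times$, we obtain $I = Jt = \ord s$, and the $\ZZ$-module bijection $\phi : \ord \to I$, $x \mapsto xs$ satisfies
\begin{equation*}
q_\alpha(\phi(x),\phi(y)) = \tr{\alpha\, xs\bar s\bar y} = \tr{\alpha\n{s}\, x\bar y} = q_{\alpha\n{s}}(x,y),
\end{equation*}
since $\n{s} = s\bar s \in \QQ$ is central in $A_\RR$. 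From $(\beta\ord)^2 = \beta^2\ord = \ell\ord$ and multiplicativity of the reduced norm we get $J^2 = c^2\ell^n\ord$ and $\n{s} = c^2\ell^n\n{t}$. Comparing with $J^2 = \n{t}^{-1}\alpha^{-1}\ord$ forces $\alpha\n{s} = 1$, so $\phi$ is an isometry $(\ord, q_1) \to (I, q_\alpha)$.

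Finally, the invariants of $(\ord, q_1)$ follow by direct inspection. The rank-$4$ free $\ZZ$-module structure on $\ord$ gives dimension $4$. For $x \in \ord$, $x\bar x = \n{x} \in \ZZ$, and the reduced trace of a rational $c$ in $A_\RR$ for a totally definite quaternion algebra over $\QQ$ is $2c$; hence $q_1(x,x) = \tr{x\bar x} = 2\n{x} \in 2\ZZ$, proving evenness, and since $\n{x}$ is a positive integer for $x \neq 0$, the minimum equals $2$ (attained at $x = 1$). The main obstacle is the principalization step above, which hinges on $\P_\ell = \beta\ord$; this is handed to us by the Arakelov-modular hypothesis through Lemma \ref{lem:existenceQA}, after which the remainder is a clean scaling argument.
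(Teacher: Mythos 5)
Your proposal is correct and follows essentially the same route as the paper: reduce to $J^2=\n{t}^{-1}\alpha^{-1}\ord$, principalize $J$ as a rational scalar times a power of $\beta$ (using that the two-sided ideals of $\ord$ are generated by $p\ord$ for $p\neq\ell$ and $\P_\ell=\beta\ord$), and then check that the induced scaling factor is exactly $1$, giving the isometry $x\mapsto xs$. Your ordering (factoring $J$ in the ideal group first, then comparing norms) is a slightly cleaner packaging of the paper's step of writing $(\alpha\n{t})^{-1}=\gamma\ell^{m}$ and extracting $\sqrt{\gamma}\in\QQ$ from parity of valuations, and your minimum computation $q_1(x,x)=2\,\n{x}\geq 2$ is a touch more complete than the paper's.
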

\begin{proof}
	Fix a quaternion algebra $A=\QA{a,b}{\QQ}$ that ramifies at only $\ell$ and a maximal order $\ord$.
	By the proof of Proposition \ref{prop:QAoverQ}, $(\ord,q_1)$ is an Arakelov-modular lattice of level $\ell$.
	For any $x\in\ord$, $q_1(x)=\trac{A/\QQ}{x\bar{x}}=\trac{A/\QQ}{\nrd{A/\QQ}{x}}$.
	As $\nrd{A/\QQ}{x}\in\ZZ$, $q_1(x)\in2\ZZ$.
	Hence $(\ord,q_1)$ is even.
	Moreover, since $q_1(1)=2$, $(\ord,q_1)$ has minimum $2$.
	Now take any Arakelov-modular lattice $(Jt,q_\alpha)$ over $\ord$ of level $\ell$.
	By Lemma \ref{lem:existenceQA} and the proof of Proposition \ref{prop:QAoverQ}, the following equation holds:
	\[
		J^2 = \nrd{A/\QQ}{t}^{-1}\alpha^{-1}\D_{\ord/\ZZ}^{-1}(\beta\ord)=\nrd{A/\QQ}{t}^{-1}\alpha^{-1}\ord.
	\]
	As $\alpha,\nrd{A/\QQ}{t}\in\QQ$, let $\gamma\ell^m=(\alpha\nrd{A\QQ}{t})^{-1}$, where $m$ is an integer and $\gamma\in\QQ$.
	For any prime $p$ and $\P$, the prime ideal above $p$ in $\ord$, we have $v_\P(\gamma\ell^m\ord)$ is even.
	If $p\neq\ell$ and $p$ divides the numerator or denominator of $\gamma$, as $m_p=1$, we must have the exopnent of $p$ in the factorization of $\gamma$ is even.
	In particular, this implies $\sqrt{\gamma}\in\QQ$.
	If $p=\ell$, then $p=\P^2$ with $\P=\beta\ord$.
	Thus we have $J=\sqrt{\gamma}\beta^m\ord$.
	As $\sqrt{\gamma}\in\QQ$ and $\beta\in\N{\ord}$, $\sqrt{\gamma}\beta^m\in\N{\ord}$.
	So $J=\sqrt{\gamma}\beta^m\ord=\ord\sqrt{\gamma}\beta^m$.
	Then the lattice $(Jt,q_\alpha)=(\ord\sqrt{\gamma}\beta^mt,q_\alpha)$.
	Define
	\begin{eqnarray*}
		h:\ord&\to&\ord\sqrt{\gamma}\beta^mt\\
		x&\mapsto&x\sqrt{\gamma}\beta^mt.
	\end{eqnarray*}
	$h$ is a $\ZZ-$module homomorphism and $\forall x,y\in\ord$
	\begin{eqnarray*}
		q_\alpha(h(x),h(y))&=&q_\alpha(x\sqrt{\gamma}\beta^mt,y\sqrt{\gamma}\beta^mt)=\trac{A/\QQ}{\alpha x\sqrt{\gamma}\beta^mt\bar{t}\bar{\beta}^m\sqrt{\gamma}\bar{y}}\\
		&=&\trac{A/\QQ}{\alpha\nrd{A/\QQ}{t}\gamma\ell^m x\bar{y}}=\trac{A/\QQ}{x\bar{y}}=q_1(x,y).
	\end{eqnarray*}
	Thus $(Jt,q_\alpha)$ is isomorphic to $(\ord,q_1)$.
\end{proof}
Recall that the \emph{Hilbert symbol} $(a,b)_p$ (or $(a,b)_v$ for $v$ corresponding to an infinite place) is defined to be $-1$ if $A$ is ramified at $p$ (or $v$) and $1$ otherwise.
Then for finite prime $p$, $(a,b)_p=-1$ iff $A\otimes_\QQ\QQ_p$ is the unique division algebra over $\QQ_p$~\cite[p.87]{Maclachlan}.
Hence we will be considering quaternion algebras $A=\QA{a,b}{\QQ}$ with $a<0,b<0$ such that $(a,b)_\ell=-1$ and $(a,b)_p=1$ for all prime $p\neq\ell$.

Let $p\neq2$ be a prime integer and let $a,b,c,x,y\in\QQ^\times$, we have~\cite{Vigneras}
\begin{enumerate}
\item $(ax^2,by^2)_p=(a,b)_p$;
\item $(a,b)_p(a,c)_p=(a,bc)_p$;
\item $(a,b)_p=(b,a)_p$;
\item $(a,1-a)_p=1$.
\end{enumerate}
The following product formula~\cite{Vigneras} holds:
	\begin{equation}\label{eqn:HSprodformula}
		\prod_{v\in\{\text{ infinite places}\}}(a,b)_v\prod_{p\in\{\text{ finite places}\}}(a,b)_p=1.
	\end{equation}
Thus, we can focus on $a,b\in\ZZ$ are $-1$ or $-p$ for $p$ is a prime.
Moreover, for $a,b\in\ZZ$ and $p\neq2$~\cite{Vigneras}
\begin{equation}\label{eqn:HSQA}
	(a,b)_p=
	\begin{cases}
		1&\text{if }p\nmid a,p\nmid b\\
		\QA{a}{p}&\text{if }p\nmid a,p||b
	\end{cases},
\end{equation}
where $\QA{a}{p}$ is the Legendre symbol, which is defined by
\[
	\QA{a}{p}=
	\begin{cases}
		1,&\text{if }a\text{ is a square }{\rm mod}~p\\
		-1,&\text{otherwise}
	\end{cases}.
\]
Recall that $A$ is ramified at the unique infinite place (identity), by the product formula (\ref{eqn:HSprodformula}),
\begin{equation}\label{eqn:HSpf}
	\prod_{p\in\{\text{ finite places}\}}(a,b)_p=-1.
\end{equation}
We have the following cases:

1. $a=-1,b=-1$ or $b=-2$, by Eq. (\ref{eqn:HSQA}), $(a,b)_p=1$ for all prime $p\neq2$.
	Then by Eq. (\ref{eqn:HSpf}), $(a,b)_2=-1$.
	Thus $\QA{-1,-1}{\QQ}$ is ramified only at $2$.

2. $a=-1,b=-p$, where $p\neq2$, then by Eq. (\ref{eqn:HSQA}), $(a,b)_q=1$ for all prime $q\neq2,p$ and 
	\[
		(a,b)_p=\QA{-1}{p}=(-1)^{\frac{p-1}{2}}=
		\begin{cases}
			-1&p\equiv3~{\rm mod}~4.\\
			1&p\equiv1~{\rm mod}~4.
		\end{cases}
	\]
	By Eq. (\ref{eqn:HSpf}), 
	\[
		(a,b)_2=\begin{cases}
			1&p\equiv3~{\rm mod}~4.\\
			-1&p\equiv1~{\rm mod}~4.
		\end{cases}
	\]
	Thus $\QA{-1,-p}{\QQ}$ is ramified only at $p$ if $p\equiv3~{\rm mod}~4$ and it is ramified only at $2$ if $p\equiv1~{\rm mod}~4$.

3. $a=-p,b=-p$, $\QA{-p,-p}{\QQ}\cong\QA{-p,-p^2}{\QQ}\cong\QA{-1,-p}{\QQ}$.

4. $a=-2,b=-p$, where $p\neq2$, then by Eq. (\ref{eqn:HSQA}), $(a,b)_q=1$ for all prime $q\neq2,p$ and 
	\[
		(a,b)_p=\QA{-2}{p}=(-1)^{\frac{p^2-1}{8}}=
		\begin{cases}
			-1&p\equiv3,5~{\rm mod}~8.\\
			1&p\equiv1,7~{\rm mod}~8.
		\end{cases}
	\]
	By Eq. (\ref{eqn:HSpf}), 
	\[
		(a,b)_2=\begin{cases}
			1&p\equiv3,5~{\rm mod}~8.\\
			-1&p\equiv1,7~{\rm mod}~8.
		\end{cases}
	\]
Thus $\QA{-2,-p}{\QQ}$ only ramifies at $p$ if $p\equiv3,5~{\rm mod}~8$ and it only ramifies at $2$ if $p\equiv1,7~{\rm mod}~8$.

6. $a=-p,b=-q$, $p\neq q\neq2$, then by Eq. (\ref{eqn:HSQA}), $(a,b)_h=1$ for all prime $h\neq2,p,q$ and
\[
	(a,b)_p=\QA{-q}{p}=\QA{-1}{p}\QA{q}{p}=(-1)^{\frac{p-1}{2}}\QA{q}{p},
\]
\[
	(a,b)_q=\QA{-p}{q}=\QA{-1}{q}\QA{p}{q}=(-1)^{\frac{q-1}{2}}\QA{p}{q}.
\]
Recall reciprocity law for Legendre symbols:
\[
	\QA{p}{q}\QA{q}{p}=(-1)^{\frac{p-1}{2}\frac{q-1}{2}}
\]
If $p,q\equiv1~{\rm mod}~4$, $(a,b)_2=-1$, $(a,b)_p=\QA{q}{p}$, $(a,b)_q=\QA{p}{q}$.
Thus $\QA{-p,-q}{\QQ}$ ramifies at only $2$ iff $\QA{p}{q}=1$.

If $p\equiv1~{\rm mod}~4$, $q\equiv3~{\rm mod}~4$, $(a,b)_2=1$, $(a,b)_p=\QA{q}{p}$, $(a,b)_q=-\QA{p}{q}$ and $\QA{p}{q}\QA{q}{p}=1$.
Thus $\QA{-p,-q}{\QQ}$ ramifies at only $p$ iff $\QA{p}{q}=-1$ and only at $q$ iff $\QA{p}{q}=1$.

If $p,q\equiv3~{\rm mod}~4$, $(a,b)_2=1$, $(a,b)_p=-\QA{q}{p}$, $(a,b)_q=-\QA{p}{q}$ and $\QA{p}{q}\QA{q}{p}=-1$.
Thus $\QA{-p,-q}{\QQ}$ ramifies at only $p$ iff $\QA{p}{q}=-1$ and only at $q$ iff $\QA{p}{q}=1$.

With the above discussion, the following lemma enables us to prove the classification result.
\begin{lemma}
	If $p\equiv1~{\rm mod}~8$ is a prime, there exists a prime $q\equiv3~{\rm mod}~4$ such that $\QA{p}{q}=-1$.
\end{lemma}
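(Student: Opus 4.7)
The plan is to reduce the lemma to Dirichlet's theorem on primes in arithmetic progressions. Since $p\md{1}{8}$, in particular $p\md{1}{4}$, so $(p-1)/2$ is even, and quadratic reciprocity gives $\QA{p}{q}=\QA{q}{p}$ for every odd prime $q$. It therefore suffices to produce a prime $q\md{3}{4}$ with $\QA{q}{p}=-1$, after which the required $\QA{p}{q}=-1$ follows from reciprocity.

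To construct such a $q$, I would first fix any quadratic non-residue $a$ modulo $p$ with $1\le a\le p-1$; at least $(p-1)/2$ such residues exist since $(\ZZ/p\ZZ)^\times$ is cyclic of even order. By the Chinese Remainder Theorem, the pair of congruences $x\md{3}{4}$ and $x\md{a}{p}$ has a unique solution $r_0$ modulo $4p$. Because $\gcd(3,4)=1$ and $\gcd(a,p)=1$, the class $r_0$ is coprime to $4p$, so Dirichlet's theorem on primes in arithmetic progressions guarantees infinitely many primes $q\md{r_0}{4p}$. Pick any such prime $q$: by construction $q\md{3}{4}$, and $q\md{a}{p}$ forces $\QA{q}{p}=\QA{a}{p}=-1$, hence $\QA{p}{q}=-1$.

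The only non-elementary ingredient is Dirichlet's theorem; everything else is a routine manipulation of Legendre symbols and CRT. The single point requiring a moment's care is verifying $\gcd(r_0,4p)=1$ so that Dirichlet actually applies, which is immediate from $a\not\equiv 0\pmod p$ and the oddness of $3$. In particular, the hypothesis $p\md{1}{8}$ is used only through its weaker consequence $p\md{1}{4}$ needed to symmetrize reciprocity, so there is no hidden obstacle beyond invoking the classical theorem.
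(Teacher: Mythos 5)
Your proof is correct and follows essentially the same route as the paper: use $p\md{1}{4}$ (a consequence of $p\md{1}{8}$) to reduce via quadratic reciprocity to finding $q\md{3}{4}$ with $\QA{q}{p}=-1$, then combine a quadratic non-residue modulo $p$ with the Chinese Remainder Theorem and Dirichlet's theorem on primes in arithmetic progressions. If anything, your write-up is slightly more complete, since you explicitly verify that a prime in the resulting residue class satisfies both required conditions, a step the paper leaves implicit.
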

\begin{proof}
	As $p\equiv1~{\rm mod}~8$, $\QA{p}{q}=-1$ iff $\QA{q}{p}=-1$.
	Take any $c_1\in\{1,2,\dots,p-1\}$ which is a quadratic non-residue~\cite[p.84]{Hardy} of $p$.
	By Chinese Reminder Theorem~\cite[p.95]{Hardy}, there exists $c\in\{1,2,\dots,4p\}$ such that $c\equiv c_1~{\rm mod}~p$ and $c\equiv3~{\rm mod}~4$.
	Clearly, ${\rm gcd}(c,p)=1$ and ${\rm gcd}(c,4)=1$, so ${\rm gcd}(c,4p)=1$.
	By Dirichlet's Theorem~\cite[Theorem 15]{Hardy}, there are infinitely many primes of the form $4pn+c$, where $n$ denotes positive integers.
\end{proof}
Thus we have
\begin{proposition}\label{prop:QAoverQram}
Take $A$ a totally definite quaternion over $\QQ$ that ramifies at only one finite prime $p$, then we have exactly one of the following scenarios:
\begin{enumerate}
	\item[1.] $p=2, A\cong\QA{-1,-1}{\QQ}$.
	\item[2.] $p\equiv3~{\rm mod}~4, A\cong\QA{-1,-p}{\QQ}$.
	\item[3.] $p\equiv5~{\rm mod}~8, A\cong\QA{-2,-p}{\QQ}$.
	\item[4.] $p\equiv1~{\rm mod}~8$, $A\cong\QA{-p,-q}{\QQ}$, where $q\equiv3~{\rm mod}~4$ is a prime such that $\QA{p}{q}=-1$.
\end{enumerate}
\end{proposition}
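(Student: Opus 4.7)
The plan is to assemble the case-by-case Hilbert symbol computations performed in the paragraphs preceding the lemma, apply the lemma just established to treat the remaining congruence class, and conclude by invoking the standard fact that a quaternion algebra over $\QQ$ is determined up to isomorphism by its set of ramified places.

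First, since $A$ is totally definite I may write $A\cong\QA{a,b}{\QQ}$ with $a,b<0$. Since Hilbert symbols are invariant under multiplication of $a$ or $b$ by a nonzero rational square, I may further assume $a,b\in\{-1\}\cup\{-q:q\text{ prime}\}$. This reduces the classification to the finite list of shapes for $(a,b)$ exhaustively analysed in the numbered items above, and the hypothesis that $p$ is the unique finite ramified prime (together with the infinite place) singles out a specific shape as a function of the congruence class of $p$.

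Second, for each congruence class appearing in the statement I would read off the appropriate entry from those tables: $(a,b)=(-1,-1)$ for $p=2$; $(a,b)=(-1,-p)$ for $p\equiv 3\pmod 4$; $(a,b)=(-2,-p)$ for $p\equiv 5\pmod 8$; and $(a,b)=(-p,-q)$ for $p\equiv 1\pmod 8$, where $q\equiv 3\pmod 4$ with $\QA{p}{q}=-1$ is supplied by the preceding lemma (the $p\equiv 1\pmod 4$, $q\equiv 3\pmod 4$ subcase of item~6 above then gives ramification at $\{p,\infty\}$ only). Uniqueness of $A$ up to isomorphism then follows from the fact that its ramification set is fixed by hypothesis to be $\{p,\infty\}$.

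The main obstacle is the case $p\equiv 1\pmod 8$: in this regime the simpler candidates $\QA{-1,-p}{\QQ}$ and $\QA{-2,-p}{\QQ}$ both fail to ramify at $p$ (they ramify at $2$ instead), so a two-prime presentation is unavoidable. The preceding lemma is engineered to overcome this obstruction, supplying the auxiliary prime $q$ by combining a quadratic non-residue modulo $p$ with the residue condition modulo $4$ through the Chinese Remainder Theorem, and then producing an actual prime of the required shape via Dirichlet's theorem on primes in arithmetic progression. Everything else is just a matter of bookkeeping against the Hilbert symbol computations already in hand.
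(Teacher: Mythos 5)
Your proposal is correct and follows essentially the same route as the paper, which presents this proposition as a direct consequence of the preceding case-by-case Hilbert symbol computations together with the lemma supplying a prime $q\equiv 3\pmod 4$ with $\QA{p}{q}=-1$ when $p\equiv 1\pmod 8$, and the fact that a quaternion algebra over $\QQ$ is determined up to isomorphism by its ramification set. Nothing is missing.
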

Now we can classify the existence of Arakelov-modular lattices of level $\ell$ for $\ell$ a prime integer over totally definite quaternions over $\QQ$.
\begin{theorem}\label{thm:QA-lprimerationalfield}
Let $A$ be a totally definite quaternion over $\QQ$ and let $\ord$ be any maximal order of $A$.
Then there exists an Arakelov-modular lattice of level $\ell$, $\ell$ prime, over $\ord$ if and only if one of the situations is satisfied:
\begin{enumerate}
	\item[1.] $A\cong\QA{-1,-1}{\QQ}$ and $\ell=2$
	\item[2.] $A\cong\QA{-1,-\ell}{\QQ}$ and $\ell\equiv3~{\rm mod}~4$.
	\item[3.] $A\cong\QA{-2,-\ell}{\QQ}$ and $\ell\equiv5~{\rm mod}~8$.
	\item[4.] $A\cong\QA{-q,-\ell}{\QQ}$ and $\ell\equiv1~{\rm mod}~8$, where $q\equiv3~{\rm mod}~4$ is a prime such that $\QA{\ell}{q}=-1$.
\end{enumerate}
\end{theorem}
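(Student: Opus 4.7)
The plan is to combine Proposition~\ref{prop:QAoverQ} with the classification in Proposition~\ref{prop:QAoverQram}, reducing the theorem to producing, in each of the four quaternion algebras, a suitable normalizing element of reduced norm $\ell$. First, Proposition~\ref{prop:QAoverQ} asserts that an Arakelov-modular lattice of level $\ell$ exists over $\ord$ if and only if both $\ell=\prod_{p\in\text{Ram}_f(A)}p$ and there is a $\beta\in\N{\ord}\cap\ord$ with $\beta\bar\beta=\ell$. When $\ell$ is prime, the factorization $\ell=\prod p_i$ forces $\text{Ram}_f(A)=\{\ell\}$, and then Proposition~\ref{prop:QAoverQram} (applied with $p=\ell$) shows that $A$ must be one of the four algebras listed in the theorem. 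This yields the ``only if'' direction.

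For the converse, the task is to exhibit, in each case, a concrete $\beta\in\N{\ord}\cap\ord$ with $\beta\bar\beta=\ell$. By Remark~\ref{rm:Classify}(2) all maximal orders in a given quaternion algebra over $\QQ$ are conjugate, so it suffices to treat one maximal order per algebra; moreover, once $\beta\in\N{\ord}$ is produced with $\beta\ord$ equal to the unique prime $\P$ of $\ord$ above $\ell$, the equality $\P^2=\ell\ord$ coming from Eq.~(\ref{eqn:llam}) forces $\beta\in\P\subseteq\ord$ and $\beta\bar\beta=\ell$ (after using positivity from total definiteness). In case~1 ($A=\QA{-1,-1}{\QQ}$, $\ell=2$) I would take $\beta=1+i$ in the Hurwitz order $\ord=\ZZ+\ZZ i+\ZZ j+\ZZ(1+i+j+ij)/2$: direct computation gives $\beta\bar\beta=2$, together with $(1+i)j(1+i)^{-1}=ij$ and $(1+i)(ij)(1+i)^{-1}=-j$, so conjugation by $1+i$ permutes the generators and preserves $\ord$. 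In cases~2--4 each algebra has the shape $A=\QA{a,-\ell}{\QQ}$ with $a\in\{-1,-2,-q\}$, hence $j^2=-\ell$ and $\nrd{A/\QQ}{j}=j\bar\jmath=\ell$; choosing in each case a maximal order containing $j$, one verifies that conjugation by $j$ (sending $i\mapsto -i$, $j\mapsto j$, $ij\mapsto -ij$) preserves $\ord$, because the ``half-integer'' fourth generator pairs with its image to recover an element in the $\ZZ$-span of the remaining generators.

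The main technical obstacle is identifying a suitable maximal order in each algebra: the ``half-integer'' fourth generator of $\ord$ depends on the congruence class of $\ell$ modulo $4$ or $8$, and in case~4 on the auxiliary prime $q$ with $\QA{\ell}{q}=-1$. Once a standard presentation of $\ord$ is fixed, the verification that $j$ (or $1+i$ in case~1) normalizes $\ord$ is a short case-by-case computation on the generators. A cleaner, more conceptual alternative that sidesteps the case analysis would be to invoke the structure of the Atkin--Lehner group $\N{\ord}/(\ord^\times\QQ^\times)$ of a maximal order in an $\QQ$-quaternion algebra ramifying only at $\{\ell,\infty\}$: since the class group of $\QQ$ is trivial and there is a single finite ramified prime, this group is cyclic of order~$2$, and its nontrivial class is represented by an element $\beta\in\N{\ord}$ with $\beta\ord=\P$. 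Then $\beta\in\P\subseteq\ord$ and $(\beta\bar\beta)\ord=\P^2=\ell\ord$ together with positivity force $\beta\bar\beta=\ell$, completing the proof of the converse.
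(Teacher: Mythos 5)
Your reduction is exactly the paper's: combine Proposition~\ref{prop:QAoverQ} (existence iff $\ell=\prod_{p\in\mathrm{Ram}_f(A)}p$ and some $\beta\in\N{\ord}\cap\ord$ has $\beta\bar\beta=\ell$) with Proposition~\ref{prop:QAoverQram} to pin down the four algebras, then exhibit $\beta$. Your case~1 is fine: $\beta=1+i$ in the Hurwitz order does satisfy $(1+i)\overline{(1+i)}=2$ and permutes the generators up to sign and translation by lattice elements (the paper uses $\beta=i-j$ instead, an immaterial difference).

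The gap is in your primary route for cases 2--4. You propose to fix, in each case, an explicit maximal order with a ``half-integer'' fourth generator and check that conjugation by $j$ preserves it, but you never produce these orders, and in case~4 this is a genuine obstacle: the order depends on both $\ell$ and the auxiliary prime $q$, and the paper itself only writes down explicit orders for cases 2 and 3 (as supplementary constructions, with multiplication/trace/norm tables), giving merely a single numerical instance of case~4. The paper's actual proof of the theorem sidesteps explicit orders entirely: it takes \emph{any} maximal order $\ord\supseteq\ZZ[1,i,j,ij]$, so that $j\in\ord$ with $\nrd{A/\QQ}{j}=\ell$, and proves $j\in\N{\ord}$ by the local criterion $\N{\ord}=\{x\in A^\times: x\in\N{\ord_p}\ \forall p\}$: for $p\neq\ell$ one has $\N{\ord_p}=\QQ_p^\times\ord_p^\times$ and $j\in\ord_p^\times$ because $j^{-1}=-j/\ell$ and $\ell\in\ZZ_p^\times$, while for $p=\ell$ the algebra is ramified and $\N{\ord_\ell}=A_\ell^\times$. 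This one computation handles cases 2, 3 and 4 uniformly and is the idea your main argument is missing. Your ``cleaner alternative'' via the Atkin--Lehner group would also close the gap, but note that the order-$2$ statement for $\N{\ord}/(\QQ^\times\ord^\times)$ is usually derived via strong approximation for \emph{indefinite} algebras; for a totally definite algebra the assertion that the class of $\P$ is represented by a normalizing element is precisely what must be proved, and the shortest proof of it is the local argument with $j$ just described. So either adopt the paper's local computation as your main argument, or supply a reference/proof of the two-sided principality claim valid in the definite case.
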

\begin{proof}
	By Propositions \ref{prop:QAoverQ} and \ref{prop:QAoverQram}, for each case it suffices to find $\ord$ and $\beta\in\N{\ord}\cap\ord$ such that $\ell=\beta\bar{\beta}$.

	As usual, let $\{1,i,j,k\}$ be a standard basis for $A=\QA{a,b}{\QQ}$, i.e. $i^2=a,j^2=b$, and $ij=k$.

	Case 1: Suppose $A=\QA{-1,-1}{\QQ}$ and $\ell=2$, take $\ord$ with basis $\{1,i,j,\frac{1+i+j+k}{2}\}$ is a maximal order of $A$~\cite[p.204]{Maclachlan}.
	Then $\beta=i-j$ satisfies $2=\beta\bar{\beta}$ and $\beta\in\ord$.
	To prove $\beta\in\N{\ord}$, it suffices to show $\beta v\beta^{-1}\in\ord$ for all $v\in\{1,i,j,\frac{1+i+j+k}{2}\}$:
	\begin{eqnarray*}
		(1-j)i(i-j)^{-1}&=&-j\in\ord\\
		(1-j)j(i-j)^{-1}&=&-i\in\ord\\
		(1-j)\frac{1+i+j+k}{2}(i-j)^{-1}&=&\frac{1-i-j-k}{2}\in\ord.
	\end{eqnarray*}

Cases 2,3,4: For $\ell\neq2$, $A\cong\QA{-q,-\ell}{\QQ}$, where
	\[
		q\begin{cases}
=1&\ell\equiv3~{\rm mod}~4\\
=2&\ell\equiv5~{\rm mod}~8\\
\equiv3~{\rm mod}~4\text{ is a prime such that }\QA{\ell}{q}=-1&\ell\equiv1~{\rm mod}~8
		\end{cases}.
	\]
	$\ZZ[1,i,j,k]$ is always an order in $A$ (see~\cite[p.84]{Maclachlan}).
	Take a maximal order $\ord\supseteq\ZZ[1,i,j,ij]$ (the existence of such a maximal order is proved in~\cite[p.84]{Maclachlan}).
	In particular, we have $j\in\ord$.
	Since $\nrd{A/\QQ}{j}=\ell$, if we prove $j\in\N{\ord}$ we are done.

	We have~\cite[p.353]{Maclachlan}
	\[
		\N{\ord}=\{x\in A^*:x\in\N{\ord_p}\ \forall p \text{ a prime integer}\}.
	\]
	If $p\neq\ell$, then $p\notin\text{Ram}_f(A)$ and~\cite[p.213]{Maclachlan}
	\[
		\N{\ord_p}=\QQ_p^*\ord_p^*.
	\]
	As $-\frac{1}{\ell}\in\ZZ_p$~\cite[p.99]{Neukirch}, $j\in\ord_p$, $\ord_p=\ZZ_p\otimes_\ZZ\ord$~\cite[p.203]{Maclachlan} gives $j^{-1}=-\frac{j}{\ell}\in\ord_p$.
	Hence $j\in\ord_p^*$ and we have $j\in\N{\ord_p}$.

	If $p=\ell$, $\N{\ord_p}=A_p^*$~\cite[p.208]{Maclachlan} and hence $j\in\N{\ord_p}$.

	We can then conclude $j\in\N{\ord}$.
\end{proof}
We also have constructive proofs for cases 2 and 3.
We need the following result~\cite[p.84,214]{Maclachlan}
\begin{enumerate}
\item $\ord$ is an order in $A$ if and only if $\ord$ is a ring of integers in $A$ which contains $\ZZ$ and is such that $\QQ\ord=A$.
\item An order $\ord$ in $A$ is maximal if and only if $\di{\ord/\ZZ}=\di{A}^2$.
\end{enumerate}
Case 2. Suppose $A\cong\QA{-1,-\ell}{\QQ}$ and $\ell\equiv3~{\rm mod}~4$, take $\ord$ with basis $\{1,i,\frac{1+j}{2},\frac{i+k}{2}\}$.
	Clearly $\ZZ\subseteq\ord$ and $\QQ\ord=A$.
	By the above, to show $\ord$ is a maximal order we need to show
	\begin{itemize}
		\item $\ord$ is a subring of $A$;
		\item the elements of $\ord$ are integers, i.e. $\trac{A/_\QQ}{x},\nrd{A/\QQ}{x}\in\ZZ$ for all $x\in\ord$;
		\item $\di{\ord/\ZZ}=\di{A}^2=\ell^2\ZZ$.
	\end{itemize}
	Since $\ord$ is a free $\ZZ-$module, $\ord$ is closed under addition.
	Also $1\in\ord$.
	To prove $\ord$ is closed under multiplication, we just need to prove the product of any two basis elements is still in $\ord$.
	Consider the following multiplication table (Table 1),
\begin{table}[ht]
\caption{Case 2 multiplication table}
{\begin{tabular}{ c | c  c  c }\hline
	$\cdot$ & $i$ & $\frac{1+j}{2}$ & $\frac{i+k}{2}$ \\\hline
	$i$ & $-1$ & $\frac{i+k}{2}$ & $\frac{-1-j}{2}$ \\
	$\frac{1+j}{2}$ & $\frac{i-k}{2}$ & $\frac{1-\ell+2j}{4}$ & $\frac{(\ell+1)i}{4}$ \\
	$\frac{i+k}{2}$ & $\frac{-1+j}{2}$ & $\frac{2k+(1-\ell)i}{4}$ & $\frac{-1-\ell}{4}$\\
\end{tabular}}
\end{table}
we have
\[
\frac{i-k}{2}=i-\frac{i+k}{2}\in\ord,\ \ \frac{-1+j}{2}=\frac{1+j}{2}-1\in\ord
\]
As $\ell\equiv3~{\rm mod}~4$, $4|(\ell+1)$, hence
\[
	\frac{1-\ell+2j}{4}=\frac{1+j}{2}-\frac{\ell+1}{4}\in\ord,\ \ \frac{(\ell+1)i}{4}\in\ord,\ \ \frac{2k+(1-\ell)i}{4}=\frac{i+k}{2}-\frac{(\ell+1)i}{4}\in\ord.
\]
We have proved that $\ord$ is closed under multiplication and hence $\ord$ is a subring of $A$.
As $\ell$ is odd, the following reduced trace table (Table 2) shows that the trace of the basis elements as well as that of the product of any two basis elements are all integers.
Each entry of the table corresponds to the reduced trace of the product of the element from the left and that from the top. For example, $(1,1)-$entry is given by $\trac{A/\QQ}{1\cdot i}=0$.
\begin{table}[ht]
\caption{Case 2 reduced trace table}
{\begin{tabular}{@{}c|ccc@{}} \hline
	$\trac{A/\QQ}{\cdot}$ & $i$ & $\frac{1+j}{2}$ & $\frac{i+k}{2}$ \\\hline
	$1$ & $0$ & $1$ & $0$\\
	$i$ & $-2$ & $0$ & $-1$ \\
	$\frac{1+j}{2}$ & $0$ & $\frac{1-\ell}{2}$ & $0$ \\
	$\frac{i+k}{2}$ & $-1$ & $0$ & $\frac{-1-\ell}{2}$\\
\end{tabular}}
\end{table}

Recall $\ell\equiv3~{\rm mod}~4$, the reduced norm table (Table 3) shows that the norm of each basis element and also that the norm of the sum of any two basis elements are integers.
Each entry of the the table here corresponds to the reduced norm of the sum of the element from the left and that from the top. For example, $(1,1)-$entry is given by $\nrd{A/\QQ}{0+i}=1$.
\begin{table}[ht]
\caption{Case 2 reduced norm table}
{\begin{tabular}{ c | c  c  c }\hline
	$\nrd{A/\QQ}{+}$ & $i$ & $\frac{1+j}{2}$ & $\frac{i+k}{2}$ \\\hline
	$0$ & $1$ & $(1+\ell)/4$ & $(\ell+1)/4$\\
	$1$ & $2$ & $(\ell+9)/4$ & $(5+\ell)/4$\\
	$i$ & $4$ & $(5+\ell)/4$ & $(9+\ell)/4$ \\
	$\frac{1+j}{2}$ &$-$ & $1+\ell$ & $(1+\ell)/2$ \\
	$\frac{i+k}{2}$ & $-$ & $-$ & $1+\ell$ \\
\end{tabular}}
\end{table}

Since the trace of the sum of two integers is an integer and the norm of the product of two integers is an integer we have proved the sum and the product of any two basis elements is still an integer in $A$.
As a subring of $A$, it follows that all the elements in $\ord$ are integers in $A$.
Hence $\ord$ is an order.

The reduced discriminant of the order $\ZZ[1,i,j,k]$ is 
\[
	\de{\begin{bmatrix}
		2&0&0&0\\
		0&-2&0&0\\
		0&0&-2\ell&0\\
		0&0&0&-2\ell
		\end{bmatrix}}\ZZ=16\ell^2\ZZ.
\]
$\ord$ is obtained from $\ZZ[1,i,j,k]$ by a basis change matrix with determinant $\frac{1}{4}$ and hence
\[
	\di{\ord}=16\ell^2\cdot\frac{1}{4^2}\ZZ=\ell^2\ZZ.
\]
We have proved $\ord$ is a maximal order.
Take $\beta=j=-1+2\cdot\frac{1+j}{2}\in\ord$, then
\[
	jij^{-1}=-i\in\ord,\ \ j\frac{i+k}{2}j^{-1}=\frac{-i-k}{2}\in\ord,\ \ j\frac{1+j}{2}j^{-1}=\frac{1+j}{2}\in\ord.
\]
This shows $j\ord j^{-1}\subseteq\ord$, since~\cite[p.349]{Reiner}
\[
	\N{\ord}=\{x\in A^\times:x\ord x^{-1}\subset\ord\},
\]
we have $\beta\in\N{\ord}$.
As $\beta\bar{\beta}=\ell$, by Proposition \ref{prop:QAoverQ}, there exists an Arakelov-modular lattice of level $\ell$ over $\ord$.

Case 3. Suppose $\ord\cong\QA{-2,-\ell}{\QQ}$ and $\ell\equiv5~{\rm mod}~8$.
In this case we take $\ord$ with basis $\{i,\frac{1+i+j}{2},j,\frac{2+i+k}{4}\}$.
As in the previous case, we consider the following three tables (Tables 4,5,6) and $\ord$ can be proved to be an order.
\begin{table}[ht]
\caption{Case 3 multiplication table}
{\begin{tabular}{@{}c|cccc@{}} \hline
	$\cdot$ & $i$ & $\frac{1+i+j}{2}$ & $j$ & $\frac{2+i+k}{4}$\\\hline
	$i$&$-2$&$\frac{i-2+k}{2}$&$k$&$\frac{i-1-j}{2}$\\
	$\frac{1+i+j}{2}$&$\frac{i-2-k}{2}$&$\frac{i+j}{2}-\frac{\ell-1}{4}$&$\frac{j+k-\ell}{2}$&$\frac{\ell+3}{8}i$\\
	$j$&$-k$&$\frac{j-k-\ell}{2}$&$-\ell$&$\frac{2j-k+\ell i}{4}$\\
	$\frac{2+i+k}{4}$&$\frac{i-1+j}{2}$&$\frac{(3-\ell)i+4j+2k}{8}$&$\frac{2j+k-\ell i}{4}$&$\frac{(1-\ell)+2i+2k}{8}$
\end{tabular}}
\end{table}
\begin{table}[ht]
\caption{Case 3 reduced trace table}
{\begin{tabular}{@{}c|cccc@{}} \hline
	$\trac{A/\QQ}{\cdot}$ & $i$ & $\frac{1+i+j}{2}$ & $j$ & $\frac{2+i+k}{4}$\\\hline
	$1$ & $0$ & $1$ & $0$ & $1$\\
	$i$ & $-4$ & $-2$ & $0$ & $-1$\\
	$\frac{1+i+j}{2}$ & $-2$ & $-\frac{\ell+1}{2}$ & $-\ell$ & $0$\\
	$j$ & $0$ & $-\ell$ & $-2\ell$ & $0$\\
	$\frac{2+i+k}{4}$ & $-1$ & $0$ & $0$ & $\frac{1-\ell}{4}$
\end{tabular}}
\end{table}
\begin{table}[ht]
\caption{Case 3 reduced norm table}
{\begin{tabular}{@{}c|cccc@{}} \hline
	$\nrd{A/\QQ}{+}$ & $i$ & $\frac{1+i+j}{2}$ & $j$ & $\frac{2+i+k}{4}$\\\hline
	$0$ & $2$ & $(\ell+3)/4$ & $\ell$ & $(\ell+3)/8$\\
	$1$ & $3$ & $(11+\ell)/4$ & $1+\ell$ & $(19+\ell)/8$\\
	$i$ & $8$ & $(\ell+19)/4$ & $\ell+2$ & $(\ell+27)/8$\\
	$\frac{1+i+j}{2}$ & - & $\ell+3$ & $(9\ell+3)/4$ & $(17+3\ell)/8$\\
	$j$ & - & - & $4\ell$ & $(9\ell+3)/8$\\
	$\frac{2+i+j}{4}$ & - & - & - & $(\ell+3)/2$
\end{tabular}}
\end{table}
Then similarly, as $\ord$ has discriminant $\ell^2\ZZ$, it is a maximal order.
By direct computation, we can prove $\beta=j\in\ord\cap\N{\ord}$.
\begin{remark}
The above computations enable us to find Arakelov-modular lattices of level $\ell$ for all $\ell\md{3}{4}$ and $\ell\md{5}{8}$.
Similar techniques can also be applied for square-free composite integers $\ell$.
\end{remark}
By Proposition \ref{prop:QAoverQ}, there exists an Arakelov-modular lattice of level $\ell$ over $\ord$.
\begin{example}\label{exp:Martinet}
\begin{enumerate}
	\item[1.] Take $A=\QA{-1,-1}{\QQ}$ and $\ord$ with basis $\{1,i,j,\frac{1+i+j+k}{2}\}$, $(\ord,q_1)$ is a $2-$modular lattice.
	\item[2.] Take $A=\QA{-1,-3}{\QQ}$ and $\ord$ with basis $\{1,i,\frac{1+j}{2},\frac{i+k}{2}\}$, $(\ord,q_1)$ is a $3-$modular lattice.
	\item[3.] Take $A=\QA{-2,-5}{\QQ}$ and $\ord$ with basis $\{i,\frac{1+i+j}{2},j,\frac{2+i+k}{4}\}$, $(\ord,q_1)$ is a $5-$modular lattice.
	\item[4.] Take $A=\QA{-3,-17}{\QQ}$ and $\ord$ with basis $\{1,\frac{1+i}{2},\frac{3+i+3j+k}{6},\frac{-3+i-2k}{6}\}$, $(\ord,q_1)$ is a $17-$modular lattice.
\end{enumerate}	
\end{example}
Note that the same construction for Examples 1 and 2 above appeared in \cite[p.266]{Martinet}.
%
%
%
\subsection{The case when $\ell$ is a positive integer}
Now we consider the case when $\ell$ is not necessarily square-free, i.e. $\ell$ being any positive integer.
Let $A$ be a totally definite quaternion algebra over $\QQ$ and let $\ord$ be any maximal order of $A$.
Let $r_p$ denote the exponent of prime $p$ in the prime factorization of $\ell$, i.e. $\ell=\prod_p p^{r_p}$.
If there exists an Arakelov-modular lattice of level $\ell$ over $\ord$, by Lemma \ref{lem:existenceQA} there exists $\beta\in\N{\ord}\cap\ord$ such that $\ell=\beta\bar{\beta}$.
And as in Eqs. (\ref{eqn:llam}) and (\ref{eqn:betalam}) we have
\[
	\ell\ord=\prod_{p|\ell,\P|p}\P^{r_pm_p},\ \ \beta\ord=\prod_{p|\ell,\P|p}\P^{\frac{r_pm_p}{2}}.
\]
We can see that if $m_p$ is odd, i.e. if $p\notin\text{Ram}_f(A)$, $r_p$ must be even.
Then
\[
	\beta\ord=\prod_{r_p\text{ even, }p\notin\text{Ram}_f(A),\P|p}\P^{\frac{r_p}{2}}\prod_{p|\ell,p\in\text{Ram}_f(A),\P|p}\P^{r_p},
\]
and
\[
	\D_{\ord/\ZZ}^{-1}(\beta\ord)=\prod_{r_p\text{ even, }p\notin\text{Ram}_f(A),\P|p}\P^{\frac{r_p}{2}}\prod_{p|\ell,p\in\text{Ram}_f(A),\P|p}\P^{r_p-1}\prod_{p\nmid\ell,p\in\text{Ram}_f(A),\P|p}\P^{-1}.
\]
As $\nrd{A/\QQ}{t}^{-1}\alpha^{-1}\in\QQ$, if $p\in\text{Ram}_f(A)$, $v_{\P}(\nrd{A/\QQ}{t}^{-1}\alpha^{-1})$ is even, thus we must have $\forall p\in\text{Ram}_f(A)$, $p|\ell$ and $r_p$ is odd.
\begin{proposition}
	Take a positive integer $\ell=\prod_pp^{r_p}$, there exists an Arakelov-modular lattice over $\ord$ iff the following conditions are all satisfied:
1. $\ell=\ell_1^2\ell_2$, where $\ell_2=\prod_{p\in\text{Ram}_f(A)}p^{r_p}$, $\ell_1$ is a positive integer coprime with $\ell_2$;

2. For all $p|\ell_2$, $r_p$ is odd;

3. There exists $\beta\in\N{\ord}\cap\ord$ such that $\ell=\beta\bar{\beta}$.	
\end{proposition}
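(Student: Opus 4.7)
My approach is to specialize Lemma~\ref{lem:existenceQAeven} to $K=\QQ$, where $\D_{\Oc_K/\ZZ}=\Oc_K$ and $\D_{\ord/\ZZ}^{-1}=\prod_{p\in\text{Ram}_f(A)}\P^{-1}$, so existence reduces to a parity condition on $\P$-adic valuations. A key preliminary (already available from the Galois-extensions subsection, since $\beta\in\N{\ord}$) is the identity $(\beta\ord)^2=\ell\ord$, which forces $v_\P(\beta\ord)=r_p m_p/2$ for every prime $\P\mid p$ of $\ord$. This equals $r_p$ when $p\in\text{Ram}_f(A)$ (where $m_p=2$) and $r_p/2$ otherwise. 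I will also use repeatedly that for $x\in\QQ^\times$, $v_\P(x)=m_p\, v_p(x)$ is a multiple of $m_p$.

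For the necessary direction, an Arakelov-modular lattice of level $\ell$ supplies, via Lemma~\ref{lem:existenceQA}, a $\beta\in\N{\ord}\cap\ord$ with $\beta\bar\beta=\ell$, giving condition~3. Evenness of $v_\P(\nrd{A/\QQ}{t}^{-1}\alpha^{-1}\D_{\ord/\ZZ}^{-1}(\beta\ord))$ combined with $v_\P$ of rationals being a multiple of $m_p$ then forces $r_p$ to be even whenever $p\notin\text{Ram}_f(A)$ and $r_p-1$ to be even (so $r_p$ odd) whenever $p\in\text{Ram}_f(A)$. Packaging the resulting factorization as $\ell_1=\prod_{p\notin\text{Ram}_f(A)}p^{r_p/2}$ and $\ell_2=\prod_{p\in\text{Ram}_f(A)}p^{r_p}$ delivers conditions~1 and~2.

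For the sufficient direction, given 1, 2, 3 I take $t=1$ and $\alpha=\ell_1$, totally positive since $\ell_1>0$, and check that $v_\P(\ell_1^{-1}\D_{\ord/\ZZ}^{-1}(\beta\ord))$ is even at every $\P$. At $\P\mid p$ with $p\notin\text{Ram}_f(A)$, the three contributions $v_\P(\ell_1^{-1})=-v_p(\ell_1)$, $v_\P(\D_{\ord/\ZZ}^{-1})=0$, and $v_\P(\beta\ord)=r_p/2=v_p(\ell_1)$ (the last equality from condition~1, as $p\nmid\ell_2$ forces $r_p=2v_p(\ell_1)$) sum to zero. At $\P\mid p$ with $p\in\text{Ram}_f(A)$, the coprimality of $\ell_1$ and $\ell_2$ in condition~1 gives $v_\P(\ell_1^{-1})=0$, while $v_\P(\D_{\ord/\ZZ}^{-1})=-1$ and $v_\P(\beta\ord)=r_p$ combine to the even integer $r_p-1$ by condition~2. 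Lemma~\ref{lem:existenceQAeven} then produces the Arakelov-modular lattice.

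The only mildly delicate point is the valuation bookkeeping at ramified primes, where the $-1$ from the codifferent has to be cancelled by the odd exponent $r_p$ supplied by condition~2. The choice $\alpha=\ell_1$ is essentially forced: it absorbs exactly the square part $\ell_1^2$ of $\ell$ on the unramified primes while contributing nothing $\P$-adically at ramified primes, so that the remaining $\ell_2$ matches, prime by prime, the exponents coming from the different and from $\beta\ord$.
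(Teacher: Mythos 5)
Your proposal is correct and follows essentially the same route as the paper: the necessity direction is the paper's own valuation/parity analysis of $v_\P(\nrd{A/\QQ}{t}^{-1}\alpha^{-1}\D_{\ord/\ZZ}^{-1}(\beta\ord))$ using $(\beta\ord)^2=\ell\ord$, and the sufficiency direction makes the same choices $t=1$, $\alpha=\ell_1$. The only cosmetic difference is that you conclude via the parity criterion of Lemma~\ref{lem:existenceQAeven}, while the paper exhibits the ideal $J=\prod_{p\mid\ell_2,\P\mid p}\P^{(r_p-1)/2}$ explicitly and applies Lemma~\ref{lem:existenceQA}; these are interchangeable.
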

\begin{proof}
In view of the above discussion, it suffices to prove if the conditions are satisfied, then there exists an Arakelov-modular lattice of level $\ell$.
We have
\[
	\D_{\ord/\ZZ}^{-1}(\beta\ord)=\prod_{p|\ell_1,\P|p}\P^{\frac{r_p}{2}}\prod_{p|\ell_2,\P|p}\P^{r_p-1}.
\]
Let $\alpha=\ell_1$, then
\[
	\alpha^{-1}\D_{\ord/\ZZ}^{-1}(\beta\ord)=\prod_{p|\ell_2,\P|p}\P^{r_p-1}.
\]
As $r_p$ are all odd for $p|\ell_2$, we can take
\[
	J = \prod_{p|\ell_2,\P|p}\P^{\frac{r_p-1}{2}}=\prod_{p\in\text{Ram}_f(A),\P|p}\P^{\frac{r_p-1}{2}}.
\]
Let $t=1$, $I=Jt$, then by Lemma \ref{lem:existenceQA}, $(I,q_\alpha)$ is an Arakelov-modular lattice of level $\ell$.
\end{proof}
\begin{remark}
	If $\ell$ is square-free, then $\ell_1=1$, $r_p=1$ for all $p|\ell_2$ and we get the same statement as in Proposition \ref{prop:QAoverQ}.	
\end{remark}
Since we are considering totally definite quaternion algebras $A$, Ram$_f(A)\neq\emptyset$.
Thus $\ell_2\neq1$, which implies
\begin{corollary}
There does not exist any Arakelov-modular lattice over $\ord$ of level $\ell$ for $\ell$ a square.
\end{corollary}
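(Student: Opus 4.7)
The plan is to argue by contradiction from the characterization of Arakelov-modular lattices of level $\ell$ given by the preceding proposition. Suppose that $\ell$ is a perfect square and that some Arakelov-modular lattice of level $\ell$ over $\ord$ exists.

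The only additional input needed is the non-emptiness of $\text{Ram}_f(A)$, which is already recorded in the paragraph immediately preceding the corollary: since $A$ is totally definite it ramifies at the unique infinite place of $\QQ$, and the global parity relation $|\text{Ram}(A)| \equiv 0 \pmod{2}$ then forces $|\text{Ram}_f(A)|$ to be odd, in particular non-empty. I would therefore fix any $p \in \text{Ram}_f(A)$.

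By the preceding proposition, $\ell$ admits a decomposition $\ell = \ell_1^2 \ell_2$ with $\ell_2 = \prod_{q \in \text{Ram}_f(A)} q^{r_q}$, where $r_q = v_q(\ell)$, and condition~2 of the proposition asserts that $r_q$ is odd for every $q \mid \ell_2$. Combined with the observation (made in the discussion preceding the proposition, using Lemma~\ref{lem:existenceQAeven}) that every $q \in \text{Ram}_f(A)$ must actually divide $\ell$, this yields $r_p \geq 1$ and $r_p$ odd for our chosen $p$. But $\ell$ being a perfect square forces $r_p = v_p(\ell)$ to be even, a contradiction. There is no substantive obstacle here; the corollary is a direct bookkeeping consequence of the proposition together with the fact that a totally definite quaternion algebra over $\QQ$ is always ramified at some finite prime.
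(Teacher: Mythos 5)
Your proof is correct and follows essentially the same route as the paper: the paper simply notes that $\text{Ram}_f(A)\neq\emptyset$ forces $\ell_2\neq 1$, and since every prime of $\text{Ram}_f(A)$ divides $\ell$ to an odd power while $\ell_1$ is coprime to $\ell_2$, the level cannot be a perfect square. Your version merely makes the valuation bookkeeping at a chosen ramified prime explicit; no gap.
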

\begin{example}
	Take $\ell_1=1,\ell_2=2^3$, $A=\QA{-1,-1}{\QQ}$, so $\ell=8$ and Ram$_f(A)=\{2\}$.
	Let $\P$ be the ideal above $2$, then $(\P,q_1)$ is an Arakelov-modular lattice of level $8$ and dimension $4$.
	By Magma \cite{Magma}, this lattice is even with minimum $4$.
\end{example}
\begin{example}
	Take $\ell_1=1,\ell_2=3^3$, $A=\QA{-1,-3}{\QQ}$, so $\ell=27$ and Ram$_f(A)=\{3\}$.
	Let $\P$ be the ideal above $3$, then $(\P,q_1)$ is an Arakelov-modular lattice of level $27$ and dimension $4$.
	By Magma \cite{Magma}, this lattice is even with minimum $6$.
\end{example}
\begin{example}
	Take $\ell_1=2,\ell_2=3$, $A=\QA{-1,-3}{\QQ}$, so $\ell=12$ and Ram$_f(A)=\{3\}$.
	Then $(\ord,q_2)$ is an Arakelov-modular lattice of level $12$ and dimension $4$.
	By Magma \cite{Magma}, this lattice is even with minimum $4$.
\end{example}
The reader may wonder if there exist Arakelov-modular lattices when the base field $K$ is not $\QQ$.
We give such an example below.
\begin{example}
	Take $K=\QQ(\sqrt{6})$, $A=\QA{-1,-1}{K}$, $\ord$ with basis $\{1,1+i,1+j,1+i+j+k\}$.
	Then $(\P_2^{-1},1)$ is an Arakelov-modular lattice of level $6$.
	Here $\P_2$ is the unique prime ideal above the $\Oc_K-$ideal $\p_2$, where $\p_2\cap\ZZ=2\ZZ$.
\end{example}
The characterization of Arakelov-modular lattices over totally definite quaternion algebras over $K\neq\QQ$ may be an interesting topic for further research. 
%
%
%
\section*{Acknowledgments}
The author would like to thank Fr\'ed\'erique Oggier for her helpful advice.
This work is supported by Nanyang President Graduate Scholarship.
%
%
%

\end{document}